\begin{document}
\parskip=6pt

\theoremstyle{plain}
\newtheorem{prop}{Proposition}
\newtheorem{lem}[prop]{Lemma}
\newtheorem{thm}[prop]{Theorem}
\newtheorem{cor}[prop]{Corollary}
\newtheorem{defn}[prop]{Definition}
\theoremstyle{definition}
\newtheorem{example}[prop]{Example}
\theoremstyle{remark}
\newtheorem{remark}[prop]{Remark}
\numberwithin{prop}{section}
\numberwithin{equation}{section}

\newenvironment{rcases}
  {\left.\begin{aligned}}
  {\end{aligned}\right\rbrace}

\def\cal{\mathcal}
\newcommand{\cF}{\cal F}
\newcommand{\cG}{\cal G}
\newcommand{\cA}{\cal A}
\newcommand{\cB}{\cal B}
\newcommand{\cC}{\cal C}
\newcommand{\cI}{\cal I}
\newcommand{\cO}{{\cal O}}
\newcommand{\cE}{{\cal E}}
\newcommand{\cH}{{\cal H}}
\newcommand{\cU}{{\cal U}}
\newcommand{\cL}{{\cal L}}
\newcommand{\cM}{{\cal M}}
\newcommand{\cD}{{\cal D}}
\newcommand{\cK}{{\cal K}}
\newcommand{\cZ}{{\cal Z}}
\newcommand{\cQ}{{\cal Q}}

\newcommand{\fQ}{\frak{Q}}

\newcommand{\bC}{\mathbb C}
\newcommand{\bP}{\mathbb P}
\newcommand{\bN}{\mathbb N}
\newcommand{\bA}{\mathbb A}
\newcommand{\bR}{\mathbb R}
\newcommand{\oL}{\overline L}
\newcommand{\oP}{\overline P}
\newcommand{\op}{\overline \partial}
\newcommand{\oQ}{\overline Q}
\newcommand{\oR}{\overline R}
\newcommand{\oS}{\overline S}
\newcommand{\oc}{\overline c}
\newcommand{\bp}{\mathbb p}
\newcommand{\oD}{\overline D}
\newcommand{\oE}{\overline E}
\newcommand{\oC}{\overline C}
\newcommand{\of}{\overline f}
\newcommand{\ou}{\overline u}
\newcommand{\oU}{\overline U}
\newcommand{\ow}{\overline w}
\newcommand{\oy}{\overline y}
\newcommand{\oz}{\overline z}
\newcommand{\oxi}{\overline \xi}

\newcommand{\hg}{\hat G}
\newcommand{\hM}{\hat M}

\newcommand{\tpr}{\widetilde {\text{pr}}}
\newcommand{\tB}{\widetilde B}
\newcommand{\tx}{\widetilde x}
\newcommand{\ty}{\widetilde y}
\newcommand{\txi}{\widetilde \xi}
\newcommand{\teta}{\widetilde \eta}
\newcommand{\tna}{\widetilde \nabla}
\newcommand{\tth}{\widetilde \theta}

\newcommand{\diml}{\text{dim}}
\newcommand{\var}{\varepsilon}
\newcommand{\End}{\text{End }}
\newcommand{\loc}{\text{loc}}
\newcommand{\Symp}{\text{Symp}}
\newcommand{\Sympo}{\text{Symp}(\omega)}
\newcommand{\lam}{\lambda}
\newcommand{\na}{\nabla}
\newcommand{\Hom}{\text{Hom}}
\newcommand{\Ham}{\text{Ham}}
\newcommand{\Ker}{\text{Ker}}
\newcommand{\dist}{\text{dist}}
\newcommand{\psl}{\rm{PSL}}
\newcommand{\rk}{\text{rk}\,}

\newcommand{\psho}{\text{PSH}(\omega)}
\newcommand{\pshO}{\text{PSH}(\Omega)}

\renewcommand\qed{ }
\begin{titlepage}

\title{\bf On the Bergman kernels of holomorphic vector bundles}
\author{L\'aszl\'o Lempert \thanks{Research partially  supported by NSF grant DMS 1764167.
\newline 2020 Mathematics subject classification 32L05, 32Q99}\\
Department of  Mathematics\\
Purdue University\\West Lafayette, IN
47907-2067, USA}
\thispagestyle{empty}

\end{titlepage}
\date{}
\maketitle
\abstract
Consider a very ample line bundle $ E \to X$ over a compact complex manifold, endowed with a  hermitian metric of curvature $-i \omega $, and the space $\mathcal{O}(E)$ of its holomorphic sections. The Fubini--Study map associates with positive definite inner products $\langle \, , \rangle$ on $\mathcal{O}(E)$ functions FS$(\langle \, ,\rangle) \in \cal{H}_{\omega}=\{u \in C^{\infty}(X):\omega +i\partial\overline{\partial} u >0\}$. We prove that FS is an injective immersion, but its image in general is not closed in $\mathcal{H}_{\omega}$. To obtain a closed range, FS has to be extended to certain degenerate inner products. This we do by associating Bergman kernels with general inner products on the dual $\mathcal{O}(E)^*$, and the paper describes some simple properties of this association.

\endabstract

\section{Introduction}

There are several ways to associate Bergman kernels/sections/functions with a holomorphic vector bundle $E \to X$ over a compact base. Suppose $E \to X$ is a very ample line bundle with a  hermitian metric $h$, and we are given a positive definite inner product $\langle \, , \rangle$ on the space $\mathcal{O}(E)$ of its sections. The formula 
\begin{equation}\label{unouno}
\kappa(x)=\sup\Big\{\frac{h(s(x),s(x))}{\langle s,s\rangle}: s \in \mathcal{O}(E)\setminus \{0\}\Big\}, \qquad x \in X,
\end{equation}
defines an analog of Bergman's function. Bergman's original construction corresponds to $X$ not a compact manifold but an open subset of $\mathbb{C}^{n}$, $E=X  \times \mathbb{C} \to X$ the trivial line bundle, $h$ the trivial metric, and $\langle s,s\rangle= \int_X h(s,s)$, the integral with respect to Lebesgue measure. As there, in our case too, the sup in (\ref{unouno}) can be described in terms of an orthonormal basis of $(\mathcal{O}(E),\langle \, , \rangle)$.

Positive definite inner products form an open cone $\mathcal{H}_{E}$ in the finite dimensional real vector space of all inner products (by which we mean  hermitian symmectric sesquilinear maps 
$\mathcal{O}(E) \times \mathcal{O}(E) \to \mathbb{C}$). One version of the Fubini--Study map 
FS:$\mathcal{H}_{E} \to C^{\infty}(X)$ is defined by associating with $\langle\,\,  ,\, \rangle \in \mathcal{H}_E$  the logarithm of 
$\kappa$ in (\ref{unouno}). Here $C^{\infty} (X)$ stands for the space of smooth functions $X \to \mathbb{R}$. 
Yau in 1987 suggested to study a variant of this map when $E=K^{\otimes N}_X$ is a tensor power of the canonical bundle of $X$ and $N \to \infty$. Such asymptotic questions have received much attention in the greater generality when $E=L^{\otimes N}$ is a power of any positive line bundle, in the works of Catlin, Tian, Zelditch, and many others; a very recent contribution is by Wu, 
[Ca, T, W, Y, p. 139]. However, this paper is not about asymptotics, but properties of one fixed map FS:$\mathcal{H}_E \to C^{\infty}(X)$ and of related maps.
In fact, FS maps into the open subset 
$$
\mathcal{H}_{\omega} = \{u \in C^{\infty}(X): \omega + i\partial \tilde{\partial} u >0 \} \subset C^{\infty}(X),
$$ 
where $-i\omega$ is the curvature (1,1) form of $h$.

\begin{thm} If $E \to X$ is a very ample line bundle over a compact base, then FS$(\mathcal{H}_E) \subset \mathcal{H}_{\omega}$ 
is a not necessarily closed submanifold, and FS is a diffeomorphism between $\mathcal{H}_E$ and its image.\footnote{That (a 
different variant of) FS is injective is already stated in \cite[Theorem 1.1]{Hs}. Yet in email correspondence with the author we
realized that a relevant Lemma 3.1 of that paper is not quite correct, the norm $\|\,\|_{op}$ there needs to be defined with more care.}
\end{thm}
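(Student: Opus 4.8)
\emph{Proof strategy.} The plan is to exhibit $\mathrm{FS}$, up to harmless diffeomorphisms, as the composition of an injective real-linear map with a logarithm. Fix a basis $\sigma_0,\dots,\sigma_N$ of $\mathcal{O}(E)$, and attach to each $\langle\,,\rangle\in\mathcal{H}_E$ its positive definite hermitian Gram matrix $G=(\langle\sigma_j,\sigma_k\rangle)$. A short computation — either with an orthonormal basis, using the description of the supremum in \eqref{unouno} recalled in the Introduction, or directly by Cauchy--Schwarz — shows that the Bergman function $\kappa$ of $\langle\,,\rangle$ equals $L(G^{-1})$, where
$$
L(H)(x)=\sum_{j,k}H_{jk}\,h\big(\sigma_j(x),\sigma_k(x)\big),\qquad x\in X,
$$
defines a real-linear map $L$ from the space of hermitian forms on $\mathcal{O}(E)$ into $C^\infty(X)$. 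Since $\langle\,,\rangle\mapsto G^{-1}$ is a diffeomorphism of $\mathcal{H}_E$ onto the open cone of positive definite hermitian matrices, this gives $\mathrm{FS}(\langle\,,\rangle)=\log L(G^{-1})$. As $E$ is globally generated, $\sigma_0(x),\dots,\sigma_N(x)$ never vanish simultaneously, so $L$ maps positive definite matrices into the open cone $\{v\in C^\infty(X):v>0\}$; thus everything reduces to the injectivity of $L$.

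Injectivity of $L$ is the crux, and is exactly the point flagged in the footnote to the theorem. Working in a local holomorphic frame of $E$, in which $h$ has weight $e^{-\phi}$ and the $\sigma_j$ are represented by holomorphic functions $\varphi_j$, injectivity of $L$ is equivalent to the linear independence, as smooth functions on any nonempty open set, of the $(N+1)^2$ products $\varphi_j\overline{\varphi_k}$; here the $\varphi_j$ themselves are linearly independent because $X$ is connected. I would establish this directly: if $\sum_k\psi_k\overline{\varphi_k}\equiv 0$ with $\psi_k$ holomorphic, then applying arbitrary holomorphic derivatives $\partial^\beta$, which annihilate the antiholomorphic $\overline{\varphi_k}$, gives $\sum_k\psi_k(x)\,\overline{\partial^\beta\varphi_k(x)}=0$ for all $\beta$ and $x$; for fixed $x$ the vectors $(\partial^\beta\varphi_k(x))_k$ span $\mathbb{C}^{N+1}$, since otherwise a nonzero combination $\sum c_k\varphi_k$ would vanish to infinite order at $x$ and hence identically, against independence of the $\varphi_k$; so all $\psi_k\equiv 0$. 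Taking $\psi_k=\sum_j H_{jk}\varphi_j$ and invoking independence of the $\varphi_j$ once more forces $H=0$.

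Granting injectivity of $L$, the structural claims follow formally. An injective real-linear map of a finite-dimensional space into the Fréchet space $C^\infty(X)$ is a closed linear embedding, so its restriction to the open cone of positive definite matrices is an embedding onto an open piece of a finite-dimensional linear subspace; precomposing with the diffeomorphism $\langle\,,\rangle\mapsto G^{-1}$ and postcomposing with the substitution diffeomorphism $v\mapsto\log v$ of $\{v>0\}$ onto $C^\infty(X)$ shows that $\mathrm{FS}$ is a diffeomorphism of $\mathcal{H}_E$ onto a finite-dimensional submanifold of $C^\infty(X)$. Since this image lies in the open set $\mathcal{H}_\omega$ — recall that $h\,e^{-\mathrm{FS}(\langle\,,\rangle)}$ is a pullback of the Fubini--Study metric of $\mathcal{O}_{\mathbb{P}^N}(1)$ under the embedding of $X$ by $|E|$ and hence has curvature $\omega+i\partial\overline{\partial}\,\mathrm{FS}(\langle\,,\rangle)>0$ — it is also a submanifold of $\mathcal{H}_\omega$.

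For the non-closedness, I would produce sections $\tau_1,\dots,\tau_r\in\mathcal{O}(E)$ with $r\le N$, having no common zero and defining an \emph{immersion} $X\to\mathbb{P}^{r-1}$, and set $H_*=\sum_l\vec v_l\vec v_l^{\,*}$ where $\vec v_l$ is the coordinate vector of $\tau_l$. Then $L(H_*)=\sum_l h(\tau_l,\tau_l)$ is everywhere positive and $\log L(H_*)\in\mathcal{H}_\omega$ (again a Fubini--Study pullback, positive because the map is an immersion), yet approximating $H_*$ by positive definite $H_t=H_*+t\,\vec v_0\vec v_0^{\,*}$ one gets $\mathrm{FS}(\langle\,,\rangle_t)=\log L(H_t)\to\log L(H_*)$ in $C^\infty(X)$ with the limit not in $\mathrm{FS}(\mathcal{H}_E)$, since $H_*$ is singular while $L$ is injective. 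Such a configuration already exists for $X=\mathbb{P}^1$, $E=\mathcal{O}_{\mathbb{P}^1}(d)$, $d\ge 4$: a general codimension-one linear subsystem of $|E|$ amounts to projecting the rational normal curve $C_d\subset\mathbb{P}^d$ from a general point, and because $\mathrm{Sec}(C_d)$ and $\mathrm{Tan}(C_d)$ have dimensions $3$ and $2$, such a projection restricts to an embedding of $C_d$ into $\mathbb{P}^{d-1}$. The two steps demanding real care are the linear-independence statement above and the check that these degenerate limits stay inside $\mathcal{H}_\omega$.
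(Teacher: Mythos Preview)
Your argument is correct and structurally identical to the paper's: factor $\mathrm{FS}$ as $\log\circ L\circ(\text{duality/inversion})$ with $L$ a real-linear map from hermitian forms into $C^\infty(X)$, reduce everything to injectivity of $L$, and exhibit non-closedness via a rank-deficient limit on $\mathcal{O}_{\mathbb{P}^1}(d)$ with $d\ge 4$. The one genuine difference is in the proof of injectivity of $L$. The paper polarizes: the Bergman section $k(x)$ is the restriction to the diagonal of the holomorphic Bergman kernel $K\in\mathcal{O}(E\boxtimes\overline E)$ on $X\times\overline X$; since the diagonal is maximally real, $k=0$ forces $K=0$, and then the fact that the forms $\xi\,\mathrm{ev}_x$ span $\mathcal{O}(E)^*$ finishes. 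Your jet argument is a legitimate local substitute that trades the two-variable analytic continuation for infinite-order vanishing in one variable; it is slightly more hands-on but avoids introducing $X\times\overline X$ altogether.

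Two small slips to fix. First, to pass from $\sum_k\psi_k\overline{\varphi_k}\equiv 0$ to $\sum_k\psi_k(x)\,\overline{\partial^\beta\varphi_k(x)}=0$ you must apply the \emph{antiholomorphic} operators $\overline\partial^{\,\beta}$, which kill the holomorphic $\psi_k$; the holomorphic $\partial^\beta$ would instead produce $\sum_k(\partial^\beta\psi_k)\overline{\varphi_k}$. Second, in your non-closedness sketch the perturbation $H_t=H_*+t\,\vec v_0\vec v_0^{\,*}$ is positive definite only when $r=N$ and $\vec v_0$ is independent of $\vec v_1,\dots,\vec v_N$; your $\mathbb{P}^1$ example has exactly this shape, but the general wording ``$r\le N$'' should be tightened.
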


In Section 2 we will recall the relevant notions of infinite dimensional geometry. That 
FS$(\mathcal{H}_E) \subset \mathcal{H}_{\omega}$ may fail to be closed we will see in Example 6.2.

The question arises then, what is its closure? We give an answer in Section 6. The closure can be described in terms of
degenerations of positive definite inner products on $\mathcal{O}(E)$. The degenerations correspond to certain semidefinite inner products on the dual space $\mathcal{O}(E)^*$. It turns out that one can very naturally associate Bergman kernels and sections 
with {\sl any} inner product on $\mathcal{O}(E)^*$, for any holomorphic vector bundle $E\to X$; and this association has neat properties, some undoubtably familiar, that we describe in Sections 3, 4, 5.---In different problems 
in \cite{DW, W} Darvas and Wu  have already associated Bergman--type functions with certain inner
products/norms on $\cO(E)^*$, and studied the resulting `dual'  Fubini--Study maps.---Rather than sampling from the findings of
this paper, we introduce here one result that can be viewed as generalizing Theorem 1.1 to an arbitrary holomorphic vector bundle $E \to X$ over a compact base. We will be brief here, and give the details in Section 3.

Our $E$ has a conjugate bundle $\overline{E} \to X$, a complex vector bundle. As a real vector bundle it is the same as $E$, but multiplication by $\lambda \in \mathbb{C}$ in the fibers of $\overline{E}$ corresponds to multiplication by $\overline{\lambda}$ in $E$. Keep in mind that $\overline{E} \to X$ is not a holomorphic vector bundle. We denote the space of smooth sections of $E \otimes \overline{E}$ (tensor product over $\mathbb{C}$) by $C^{\infty}(E \otimes \overline{E})$. 

We continue to denote the cone of positive definite inner products on $\cO(E)$ by $\cH_E$. Any  
$\langle\, ,  \rangle \in \mathcal{H}_E$ determines a smooth section $k \in C^{\infty}(E \otimes \overline{E})$, its Bergman section. If $s_1, \dots,s_d$ form an orthonormal basis of $(\mathcal{O}(E), \langle\, ,  \rangle), $
\begin{equation}\label{unodos}
k(x)= \sum_{j=1}^d s_j(x) \otimes s_j(x), \qquad x \in X.
\end{equation}
Thus there is a map
\begin{equation}\label{unotres}
b:\mathcal{H}_E \to C^{\infty}(E \otimes \overline E)
\end{equation}
that associates with $\langle\, , \rangle \in \mathcal{H}_E$ the section $k$ in (\ref{unodos}). 

\begin{thm}For any holomorphic vector bundle over a compact base $b(\mathcal{H}_E) \subset C^{\infty}(E \otimes \overline{E})$ is a submanifold, and $b$ is a diffeomorphism between $\mathcal{H}_E$ and its image.
\end{thm}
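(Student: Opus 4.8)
The plan is to show that $b$ is an injective immersion and that its image is locally closed, so that by the same infinite-dimensional manifold machinery invoked for Theorem 1.1 (recalled in Section 2), $b$ becomes a diffeomorphism onto a submanifold. The three things to verify are: (i) $b$ is injective; (ii) the differential $db_{\langle\, ,\rangle}$ is injective at every point, with closed split range; (iii) $b(\mathcal{H}_E)$ is a locally closed subset of $C^\infty(E\otimes\overline E)$, in fact presented as the image of a chart.

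First I would establish injectivity. The key observation is that the Bergman section $k$ determines, and is determined by, the inner product. Indeed, fix $\langle\, ,\rangle\in\mathcal{H}_E$ with orthonormal basis $s_1,\dots,s_d$; the section $k=\sum_j s_j\otimes\overline{s_j}$ is independent of the choice of orthonormal basis (a unitary change of basis leaves $\sum_j s_j\otimes\overline{s_j}$ fixed). Conversely, $k$ recovers $\langle\, ,\rangle$: for $s\in\mathcal{O}(E)$ write $s=\sum_j c_j s_j$, and observe that contracting $k$ against an appropriate dual object reproduces the coefficients. More concretely, $k$ may be viewed as the Schwartz kernel of the orthogonal projection (the identity, since $\dim\mathcal{O}(E)<\infty$) written in the metric $\langle\, ,\rangle$; equivalently, the map $\mathcal{O}(E)^*\to\mathcal{O}(E)$, $\xi\mapsto \sum_j \xi(s_j)\, s_j$, is exactly the inverse of the isomorphism $\mathcal{O}(E)\to\mathcal{O}(E)^*$ induced by $\langle\, ,\rangle$, and this map is intrinsically read off from $k$ by pairing the $\overline E$-factor of $k(x)$, integrated or evaluated suitably, against elements of $\mathcal{O}(\overline E)\cong\overline{\mathcal{O}(E)}$. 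Hence two inner products with the same Bergman section have the same induced isomorphism $\mathcal{O}(E)\to\mathcal{O}(E)^*$, so are equal.

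Next, the immersion property and the description of the image. I would use the global frame of $\mathcal{O}(E\otimes\overline E)$ — or rather the finite-dimensional subspace $V\subset C^\infty(E\otimes\overline E)$ spanned by all $s\otimes\overline t$ with $s,t\in\mathcal{O}(E)$ — to identify $b$ with a map into $V$. Fixing a basis of $\mathcal{O}(E)$, an inner product is a positive definite Hermitian $d\times d$ matrix $H$, its orthonormal-basis expansion gives $k$ corresponding to the matrix $H^{-1}$ (in the natural basis $s_i\otimes\overline{s_j}$ of $V$), so $b$ factors as $H\mapsto H^{-1}$ followed by the linear embedding of Hermitian matrices into $V$. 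The map $H\mapsto H^{-1}$ on positive definite Hermitian matrices is a smooth diffeomorphism onto the cone of positive definite Hermitian matrices, with everywhere-invertible differential $\delta H\mapsto -H^{-1}(\delta H)H^{-1}$. Therefore $b$ is a smooth embedding of $\mathcal{H}_E$ onto the image of that positive-definite cone under the (injective, linear, hence closed-range and split, since $V$ is finite-dimensional) inclusion $V\hookrightarrow C^\infty(E\otimes\overline E)$. In particular $b(\mathcal{H}_E)$ is an open subset of the finite-dimensional affine-linear subspace $V$, hence a locally closed submanifold of $C^\infty(E\otimes\overline E)$, and $b$ is a diffeomorphism onto it.

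The main obstacle — really the only substantive point — is verifying that the finite-dimensional subspace $V$ is genuinely a \emph{submanifold} of the Fréchet (or appropriate topological) vector space $C^\infty(E\otimes\overline E)$ in the sense of Section 2, i.e. that the inclusion $V\hookrightarrow C^\infty(E\otimes\overline E)$ is a linear embedding with closed, complemented image; for a finite-dimensional subspace of a locally convex space this is standard (pick a complementary closed subspace via Hahn–Banach), but one must make sure the notion of submanifold used in Section 2 is the one for which this holds, and that the linear structure on the space of sections is the one making $b$ smooth. Granting the conventions of Section 2, everything else is the routine linear algebra of $H\mapsto H^{-1}$ above; no hard analysis enters, precisely because $\mathcal{O}(E)$ is finite-dimensional and $b$ lands in the finite-dimensional space $V$.
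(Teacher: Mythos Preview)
Your factorization---$H\mapsto H^{-1}$ followed by the linear map $(c_{ij})\mapsto\sum_{i,j} c_{ij}\,s_i\otimes\overline{s_j}$---is exactly the paper's decomposition $b=\beta\circ(\delta|\mathcal{H}_E)$: the duality map $\delta$ is your inversion $H\mapsto H^{-1}$, and $\beta$ is your linear map into $C^\infty(E\otimes\overline E)$. So the overall strategy matches the paper's.

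But you have glossed over the one genuinely nontrivial step. You speak of ``the natural basis $s_i\otimes\overline{s_j}$ of $V$'' and call the map from Hermitian matrices into $V$ a ``linear embedding'', yet you never verify that the smooth sections $s_i\otimes\overline{s_j}$ are linearly independent in $C^\infty(E\otimes\overline E)$. This is precisely the injectivity of $\beta$, and it is the only analytic content in the whole proof. Your first paragraph attempts to address injectivity by recovering $\langle\,,\rangle$ from $k$ via ``pairing the $\overline E$-factor \dots\ integrated or evaluated suitably'', but that does not work as stated: an element of $\mathcal{O}(E)^*$ is a global linear functional, not a pointwise object, and there is no canonical way to contract the section $k$ against it without auxiliary choices (a hermitian metric on $E$, a volume form on $X$) on which the answer would then depend.

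The paper fills this gap in Theorem~4.1(a) by passing to the full Bergman kernel: if $k=\sum a_{ij}\,s_i\otimes\overline{s_j}$ vanishes identically, the holomorphic section $K(x,y)=\sum a_{ij}\,s_i(x)\otimes s_j(y)$ of $E\boxtimes\overline E\to X\times\overline X$ vanishes on the diagonal $\{(x,x):x\in X\}$, which is a maximally real submanifold of $X\times\overline X$; hence $K\equiv 0$, and then pairing with $\xi\otimes\overline\eta$ together with the fact that the linear forms $\xi\,\mathrm{ev}_x$ span $\mathcal{O}(E)^*$ forces all $a_{ij}=0$. Once this is in hand, the rest of your argument goes through verbatim. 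Incidentally, the issue you flag as the ``main obstacle''---that a finite-dimensional linear subspace is a direct submanifold of a Fr\'echet space---is automatic, as the paper already remarks at the end of Section~2.
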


\section{Basics of Fr\'echet manifolds} 
 
 Our reference to this material in Hamilton's paper \cite{Hm} (of which we will need very little). Suppose $F,G$ are Fr\'echet spaces over $\mathbb{R}$, and $U \subset F$ is open. A map $f: U \to G$ is of class $C^1$ if the directional derivatives 
 \begin{equation}\label{dosuno}
 df(x,\xi)= \lim_{t \to 0} \frac{f(x+t\xi)-f(x)}{t}
 \end{equation}
exist for all $x\in U$, $\xi \in F$, and define a continuous function $df: U \times F \to G$, called the differential of $f$. If $df$ is $C^1$, we say $f$ itself is $C^2$, and so on. The map $f$ is smooth if it is of class $C^k$ for all $k=1, 2,\dots$

Fr\'echet manifolds are Hausdorff topological spaces, glued together from open subsets of Fr\'echet spaces by smooth diffeomorphisms. One defines smoothness of maps between Fr\'echet manifolds by pulling back to open subsets of Fr\'echet spaces. The tangent bundle $TM$ of a Fr\'echet manifold $M$ is also a Fr\'echet manifold, in fact a Fr\'echet bundle over $M$. A smooth map $F:M \to N$ of Fr\'echet manifolds induces a smooth homorphism $TM \to TN$.

If $M$ is a Fr\'echet manifold, a subset $N \subset M$ is called a submanifold if the following holds: Every $x \in N$ has a neighborhood $U \subset M$, there are a Fr\'echet space $F$ and a closed subspace $G \subset F$, and there is a diffeomorphism $\Phi$ of $U$ on a neighborhood $V$ of $ 0 \in F$ such that $\Phi (U \cap N)=V \cap G$. The restrictions to $N$ of all such $\Phi$ define a manifold structure on $N$. Here our terminology deviates from Hamilton's. First, he requires that $N \subset M$ be a closed set, while we do not. Accordingly, we call $N \subset M$ a closed submanifold if it is a closed subset and a submanifold. Second, in his definition Hamilton stipulates that the subspace $G \subset F$ have a closed complement. If this also holds, we would call $N$ a direct submanifold; but the distinction will be of no consequence, as all submanifolds in this paper will be finite dimensional, hence automatically direct.

\section{Bergman Kernels} 

Fix a holomorphic vector bundle (of finite rank!) $E \to X$ over a compact base. In this section we will introduce two notions of Bergman kernels, the full Bergman kernel and its restriction to the diagonal that we call Bergman section.

If $Y$ is any complex manifold, we will write $\overline{Y}$ for the complex manifold with the same underlying differential manifold but opposite complex structure. Thus, a function $f$ on an open subset of $\overline{Y}$ is holomorphic if $\overline{f}$, viewed as a function on $Y$, is holomorphic. If $Y$ happens to be a complex vector space as well, then $\overline{Y}$ too acquires a complex vector space structure: As real vector spaces $\overline{Y}=Y$, but multiplication with $\lambda \in \mathbb{C}$ in $\overline{Y} $ is the same as multiplication with the complex conjugate $\overline{\lambda}$ in $Y$. Applying this construction in families, we see that if $\pi: E \to X$ is a holomorphic vector bundle, $\pi: \overline{E} \to \overline{X}$ becomes a holomorphic vector bundle. Keep in mind that $\mathcal{O}(E) = \mathcal{O}(\overline{E})$ as real vector spaces, but again, the complex structures are opposite. It follows that their duals are not equal, but there is a real linear isomorphism 
$\cO(E)^*\to\cO(\overline E)^*$
that associates with a linear form $\sigma$ on $\cO(E)$ its conjugate $\overline\sigma$, a linear form on $\cO(\overline E)$.
Similarly, if $x\in X$, there is a real linear isomorphism $E_x^*\to \overline E_x^*$ that sends a linear form 
$\xi\in E_x^*$ to its conjugate $\overline\xi\in\overline E_x^*$.

Occasionally we will view $\overline E$ as a bundle over $X$, not $\overline X$. When we do so, we will write 
$\overline E\to X$;  note that this is just a smooth complex vector bundle.

(Full) Bergman kernels of $E \to X$ are holomorphic sections of the external tensor product 
$E \boxtimes \overline E \to X \times \overline{X}$. Here 
\begin{equation}\nonumber
E \boxtimes \overline E=\coprod_{x \in X, y \in \overline{X}} E_x \otimes \overline{E}_y,  
\end{equation}
on which the obvious projection to $X \times \overline{X}$ and the local trivializations of $E$, $\overline{E}$ induce the structure of a holomorphic vector bundle. Commonly, Bergman kernels are associated with positive definite inner products on the space 
$\mathcal{O}(E)$ of sections. It will be more profitable, though, to associate them with inner products on the dual space 
$\mathcal{O}(E)^*$. Of course, positive definite (or nondegenerate) inner products on $\mathcal{O}(E)$ 
and $\mathcal{O}(E)^*$ are in bijective correspondence. But we will need to deal with degenerations, and they are easier to describe on $\mathcal{O}(E)^*$ than on $\mathcal{O}(E)$; they will be certain semidefinite inner products on $\mathcal{O}(E)^*$. 
At the same time it turns out that Bergman kernels can be associated with arbitrary inner products on $\mathcal{O}(E)^*$. 

We write $\mathcal{I}_E$ for the real vector space of inner products $\mathcal{O}(E)^*$. Elements 
$\langle \langle \, , \rangle \rangle \in \mathcal{I}_E $ are in bijective correspondence with pairs $(V, \langle \, , \rangle)$ of subspaces  $V \subset \mathcal{O}(E)$ and non-degenerate inner products $\langle \, , \rangle$ on $V$ as follows. A pair 
$(V,\langle \, ,\rangle)$ induces an inner product $\langle \, , \rangle'$ on the dual space $V^*$, a quotient of $\mathcal{O}(E)^*$. Pulling back $\langle \, , \rangle'$ by the quotient map of $ \mathcal{O}(E)^* \to V^*$ gives rise to the corresponding 
$\langle \langle \, , \rangle \rangle \in \mathcal{I}_E$ .
We write 
\begin{equation}\label{tresuno}
\langle \langle \, , \rangle \rangle= \delta(V, \langle \, , \rangle) \qquad \text{ and } \qquad \langle \langle \, , \rangle \rangle =\delta (\langle \, , \rangle) \quad\text{if } V=\mathcal{O}(E).
\end{equation}
When $V=\mathcal{O}(E),$  $\delta( \langle \, , \rangle)$ is just the dual inner product on $\mathcal{O}(E)^*$. Conversely, given $\langle \langle \, , \rangle \rangle \in \mathcal{I}_E$, let 
$$
N=\{\sigma \in \mathcal{O}(E)^*: \langle \langle \sigma, \cdot \rangle\rangle=0\}
$$
denote its kernel, and $V \subset \mathcal{O}(E)$ the annihilator of $N$,
$$ 
V=\{s \in \mathcal{O}(E): \sigma(s)=0 \text{ for all } \sigma \in N\}.
$$
The inner product $\langle \langle \, , \rangle \rangle$ descends to a nondegenerate inner product $\langle \, , \rangle '$ on 
$\mathcal{O}(E)^* / N$. Any $s\in V$ induces a linear form $\sigma \mapsto \sigma(s)$ on $\mathcal{O}(E)^*$, that factors through 
$\mathcal{O}(E)^* \to \mathcal{O}(E)^* / N$. Hence it can be represented as 
$\sigma \mapsto \langle \langle \sigma, \tau \rangle \rangle$with  some $ \tau= \tau_s \in \mathcal{O}(E)^*$ determined modulo 
$N$. The nondegenerate inner product on $V$ given by  $\langle s, t \rangle = \langle \langle \tau_t, \tau_s\rangle\rangle$ then 
satisfies $\delta( V, \langle \, , \rangle)=\langle\langle \, ,\rangle \rangle$.

We are ready to define the Bergman kernel of $\langle \langle \, , \rangle \rangle \in \mathcal{I}_E$. If ev$_x$ denotes the evaluation map 
$$
\textnormal{ev}_x: \mathcal{O}(E) \ni s \mapsto s(x) \in E_x,\qquad x \in X,
$$
the Bergman kernel in question is a section $K$ of $E \boxtimes \overline{E}$ with the property that 
\begin{equation}\label{tresdos}
\langle\langle \xi \textnormal{ev}_x, \eta \,\textnormal{ev}_y \rangle\rangle=(\xi \otimes \overline{\eta}) K(x,y) \qquad \textnormal{for all } x,y \in X, \, \xi \in E_{x}^*, \,\eta \in E_y^*.
\end{equation}
One way to see that such $K(x,y)$ exists is to note first that elements $\sum a_j \otimes b_j$ of a tensor product 
$A \otimes  B$ determine a linear form $\varphi$ on $A^*\otimes B^*$,
$$ 
\varphi \left(\sum \alpha_i \otimes \beta_i \right)= \sum_{i, j} \alpha_i(a_j)\beta_i(b_j), \qquad \alpha_i \in A^*, \beta_i \in B^*,
$$
and when $A,B$ are finite dimensional, all linear forms $\varphi$ on $A^*\otimes B^*$ arise in this way; second, that for 
fixed $x,y$, the left hand side of \eqref{tresdos} determines a linear form on $E_x^* \otimes \overline{E}^*_y$. Clearly, $K(x,y)$ is unique.
 
 Since the left hand side of \eqref{tresdos} depends holomorphically on $\xi\in E^*,\overline\eta\in\overline E^*$, it follows that 
 $K$ is a holomorphic section of $E \boxtimes \overline{E}$. If $\langle \langle \, , \rangle \rangle =\delta (V, \langle \, ,\rangle)$, then $K$ can be represented in terms of a signed orthonormal basis of $(V, \langle \, , \rangle)$. Suppose a basis
 $s_1,\dots,s_{p+q} \in V$ satisfies
 \[ 
 \langle s_i, s_j \rangle= \begin{cases}
 					\delta_{ij} & \textnormal{ if } i \leq p \\
					-\delta_{ij} & \textnormal{ if } i >  p  
					\end{cases}
  \quad(\textnormal{Kronecker symbol}). 
  \] 
 Then
 \begin{equation}\label{trestres}
 K(x,y)= \sum_{j=1}^p s_j (x) \otimes s_j(y) - \sum_{j=p+1}^{p+q} s_j(x) \otimes s_j(y).
 \end{equation}
  
 Indeed, choose $\sigma_1,\dots, \sigma_{p+q} \in \mathcal{O}(E)^*$ so that their classes $[\sigma_j] \in V^*$ form a dual basis; and complete them by elements of the kernel $N$ of $\langle\langle\,, \rangle\rangle$
 to a basis $ \sigma_1,\dots, \sigma_r$ of $\mathcal{O}(E)^*$. Write 
 $$
 \xi \textnormal{ev}_x=\sum_1^r \alpha_i \sigma_i, \quad \eta \,\textnormal{ev}_y= \sum_1^r \beta_i \sigma_i, \qquad \alpha_i, \beta_i \in \mathbb{C}.
 $$
We have $\xi s_j(x)=\xi \textnormal{ev}_x s_j=\alpha_j$ for $j=1,\dots,p+q$. Hence
$$
\langle \langle  \xi \textnormal{ev}_x, \eta \,\textnormal{ev}_y \rangle \rangle=\langle \langle \sum_1^r \alpha_j \sigma_j, \sum_1^r \beta_j \sigma_j \rangle \rangle = \sum_1^p \alpha_j \overline{\beta}_j - \sum_{p+1}^{p+q} \alpha_j \overline{\beta}_j=(\xi \otimes \overline{\eta}) K(x,y)
$$
if $K$ is given by \eqref{trestres}.

The familiar reproducing property of $K$ carries over to the generality we consider here. If $\eta\in E_y^*$, write
$\overline\eta K$ for $(\text{id}_E\otimes\overline\eta) K(\cdot,y)\in V$.

\begin{lem}
If $s\in V$ and $\eta\in E_y^*$, then 
\begin{equation} 
\eta s(y)=\langle s,\overline\eta K\rangle.
\end{equation}
\end{lem}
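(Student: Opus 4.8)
The plan is to reduce everything to the defining relation \eqref{tresdos} by exploiting that point evaluations span the dual space, and then to read off the answer from the bookkeeping data $N$, $V$, $\tau_s$ introduced before the lemma.

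First I would record the elementary fact that the linear forms $\xi\,\textnormal{ev}_x$, $x\in X$, $\xi\in E_x^*$, span $\mathcal{O}(E)^*$: a section $s$ annihilated by all of them satisfies $\xi(s(x))=0$ for all $x,\xi$, hence $s=0$, so these forms have trivial annihilator in the finite-dimensional space $\mathcal{O}(E)$. Second, the key identity: for \emph{every} $\sigma\in\mathcal{O}(E)^*$,
\[\sigma(\overline\eta K)=\langle\langle\sigma,\eta\,\textnormal{ev}_y\rangle\rangle .\]
Both sides are linear in $\sigma$, so by the first step it suffices to verify this for $\sigma=\xi\,\textnormal{ev}_x$. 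There the left side is $\xi\big((\textnormal{id}_E\otimes\overline\eta)K(x,y)\big)=(\xi\otimes\overline\eta)K(x,y)$ by the very definition of $\overline\eta K$, and the right side is the same quantity by \eqref{tresdos}.

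Third, I would harvest two consequences. Choosing $\sigma$ in the kernel $N$ of $\langle\langle\,,\rangle\rangle$ makes the right side vanish, so $\sigma(\overline\eta K)=0$ for every $\sigma\in N$; that is precisely the statement $\overline\eta K\in V$ (confirming the assertion made in the notation preceding the lemma), and in particular $\tau_{\overline\eta K}$ is defined. Recall that $\tau_s$ is characterized mod $N$ by $\sigma(s)=\langle\langle\sigma,\tau_s\rangle\rangle$ for all $\sigma$, and that consequently $\langle s,t\rangle=\langle\langle\tau_t,\tau_s\rangle\rangle=\tau_t(s)$ for $s,t\in V$. Comparing the characterization of $\tau_{\overline\eta K}$ with the key identity gives $\langle\langle\sigma,\tau_{\overline\eta K}-\eta\,\textnormal{ev}_y\rangle\rangle=0$ for all $\sigma$, i.e. $\tau_{\overline\eta K}\equiv\eta\,\textnormal{ev}_y\pmod N$ (using Hermitian symmetry to move $N$ between slots). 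Hence for $s\in V$,
\[\langle s,\overline\eta K\rangle=\tau_{\overline\eta K}(s)=(\eta\,\textnormal{ev}_y)(s)=\eta\,s(y),\]
the middle equality because $s\in V$ is annihilated by $N$.

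I do not expect a real obstacle; the only thing to watch is the bookkeeping of conjugations and of the sesquilinearity convention — in particular that "$\overline\eta$ contracted in the $\overline E_y$-slot" is the complex conjugate of "$\eta$ contracted in the $E_y$-slot," and that it is immaterial in which slot the kernel $N$ sits. An alternative, purely computational route avoids $\tau_s$ altogether: write $\langle\langle\,,\rangle\rangle=\delta(V,\langle\,,\rangle)$, pick a signed orthonormal basis $s_1,\dots,s_{p+q}$ of $(V,\langle\,,\rangle)$ so that $K$ is given by \eqref{trestres}, observe $\overline\eta K=\sum_{j\le p}\overline{\eta s_j(y)}\,s_j-\sum_{j>p}\overline{\eta s_j(y)}\,s_j\in V$, and expand $\langle s,\overline\eta K\rangle$ for $s=\sum_j c_j s_j$; the signs coming from the indefinite form cancel those in \eqref{trestres}, leaving $\sum_j c_j\,\eta s_j(y)=\eta\,s(y)$.
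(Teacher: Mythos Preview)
Your proposal is correct. Your main argument is genuinely different from the paper's: the paper simply picks a signed orthonormal basis $s_1,\dots,s_{p+q}$ of $(V,\langle\,,\rangle)$, plugs in the explicit formula \eqref{trestres} for $K$, writes $s=\sum_j a_j s_j$, and computes $\langle s,\overline\eta K\rangle$ directly---exactly your ``alternative, purely computational route.'' Your primary route is instead basis-free: you use the defining relation \eqref{tresdos} together with the spanning property of the $\xi\,\textnormal{ev}_x$ to identify $\tau_{\overline\eta K}\equiv\eta\,\textnormal{ev}_y\pmod N$, and read off the result from the construction of $\langle\,,\rangle$ via the $\tau_s$. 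This buys a more conceptual proof that makes the reproducing property a direct consequence of the defining equation \eqref{tresdos} and the duality $\delta(V,\langle\,,\rangle)$, and along the way it verifies that $\overline\eta K\in V$ rather than relying on the explicit formula. The paper's version is shorter and entirely elementary once \eqref{trestres} is in hand; yours would transplant unchanged to settings where a convenient diagonalizing basis is not available.
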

\begin{proof}
With $s_j$ as in (3.3) and $a_j\in\bC$ let $s=\sum_j a_j s_j$. The right hand side of (3.4) is
$$
\sum_{j=1}^{p+q}a_j\Big\langle s_j,\sum_{i=1}^p s_i\overline\eta s_i(y)-\sum_{i=p+1}^{p+q}s_i\overline\eta s_i(y)\Big\rangle=
\sum_{j=1}^{p+q} a_j\eta s_j(y)= \eta s(y),
$$
as claimed.
\end{proof}

The restriction of $K$ to the diagonal
\begin{equation}\label{trescuatro}
k(x)= K(x,x)=\sum_1^p s_j(x) \otimes s_j(x) -\sum_{p+1}^{p+q} s_j(x) \otimes s_j(x) \in E_x \otimes \overline{E}_x
\end{equation}
is also of interest. It is a smooth section of $E \otimes \overline{E} \to X$ that we call the Bergman section. By pairing with
$E^*\otimes \overline E^*$ it induces a 
not necessarily positive definite hermitian metric on this latter. If this induced metric is nondegenerate, by duality it further 
induces a nondegenerate, but again not necessarily definite hermitian metric on $E\otimes\overline E$. One version of the 
Fubini--Study map, different from the map introduced in Section 1,  
takes a positive definite inner product $\langle\,, \rangle\in\cH_E$ on $\cO(E)$, 
the dual inner product $\langle\langle\,, \rangle\rangle$ on $\cO(E)^*$, and its Bergman section $k$. Often it is convenient to
define the Fubini--Study map to send $\langle\,, \rangle$ to the hermitian metric of $E\otimes\overline E$ induced by $k$, necessarily
positive definite this time. This has the advantage over the notion defined in the Introduction, through (1.1), that it is independent of 
the choice of any hermitian metric $h$ on $E$, and works not only for line bundles. Nonetheless, in this paper we will stick 
with the map FS as defined in the introduction, more natural for our purposes. (5.1), (5.8) will clarify how the two variants of
the Fubini--Study map (i.e. the two variants of the function $\kappa$ in (1.1) and (5.1)) are related.

\section{Basic properties} 

Both the Bergman kernel $K$ and the Bergman section $k$ have an obvious symmetry property. There is an involutive 
$\mathbb{R}$--homomorphism $\iota:E \boxtimes \overline{E} \to E \boxtimes \overline{E}$ covering the map 
$X\times\overline X\ni(x,y)\to(y,x)\in X\times\overline X$, 
determined by 
\begin{equation} 
\iota(e \otimes f)= f \otimes e. 
\end{equation}
 We write 
$\cO(E\boxtimes\overline E)^\text{herm}$ for those sections $L\in\cO(E\boxtimes\overline E)$ that satisfy $\iota L(x,y)=L(y,x)$.
(3.3) shows that $K$ is in this space. Furthermore, $\iota$ leaves invariant the bundle
$E\otimes\overline E\to X$, the restriction of 
$E\boxtimes\overline E$ to the diagonal; its fixed points form a real subbundle 
$(E \otimes \overline{E})^\text{ herm} \subset E \otimes \overline{E}$. In a  
local frame $s_1,\dots,s_r$ of $E$ elements of $E_x \otimes \overline{E}_y$ are of form 
$$
t=\sum_{i,j}(a_{ij}s_i(x))\otimes s_j(y)=\sum_{i,j} s_i(x) \otimes (\overline{a_{ij}} s_j(y)), \qquad a_{ij} \in \mathbb{C}.
$$
The involution corresponds to $a_{ij} \mapsto \overline{a_{ji}}$, and elements of $(E \otimes \overline{E})^{\text{herm}}_x$ to  hermitian matrices $(a_{ij})$. It follows that $(E \otimes \overline{E})^{\textnormal{herm}}$ is indeed a subbundle, locally isomorphic to the trivial bundle over $X$ whose fibers consist of 
$r \times r$  hermitian matrices. We write $C^{\infty}(E \otimes \overline{E})$ and $C^{\infty}(E \otimes \overline{E})^{\textnormal{herm}}$ for the Fr\'echet spaces of smooth sections of $E \otimes \overline{E}$, respectively, $(E \otimes \overline{E})^{\textnormal{herm}}$. \eqref{trescuatro} shows that $k \in C^{\infty}(E \otimes \overline{E})^{\textnormal{herm}}$.  

Define linear maps 
\begin{equation} 
B:\mathcal I_E\to \cO(E\boxtimes\oE)^\text{herm}\qquad\text {and}\qquad 
\beta: \mathcal{I}_E \to C^{\infty}(E \otimes \overline{E})^{\textnormal{herm}}
\end{equation}
by sending  $\langle\langle \, , \rangle \rangle \in \mathcal{I}_E$ to its Bergman kernel $K$,
respectively Bergman section $k$, of (3.2), (3.5). 

\begin{thm} 
(a) $B$ is an isomorphism of topological vector spaces and $\beta$ is an isomorphism between the topological vector spaces 
$\mathcal{I}_E$ and $\beta(\mathcal{I}_E)\subset C^{\infty}(E \otimes \overline{E})^{\textnormal{herm}}$.

(b) $\langle \langle \, , \rangle \rangle\in\cI_E$ is positive semidefinite if and only if with $K=B(\langle \langle \, , \rangle \rangle)$ 
and arbitrary choice of finitely many $x_j\in X$ and $\xi_j\in E_{x_j}^*$ the hermitian matrix 
\begin{equation*}
\big((\xi_i\otimes\overline{\xi_j})K(x_i,x_j)\big)_{i,j}
\end{equation*}
is positive semidefinite.
\end{thm}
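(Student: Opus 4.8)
The plan is to prove (a) first, then deduce (b). For (a), the key observation is that $\cI_E$ and $\cO(E\boxtimes\overline E)^{\text{herm}}$ (resp.\ $C^\infty(E\otimes\overline E)^{\text{herm}}$) are all finite-dimensional real vector spaces, so it suffices to show $B$ and $\beta$ are injective linear maps; continuity both ways is then automatic. Linearity of $B$ and $\beta$ is clear from the defining relation \eqref{tresdos}, which is linear in $\langle\langle\,,\rangle\rangle$, together with uniqueness of $K(x,y)$. For injectivity of $B$, suppose $B(\langle\langle\,,\rangle\rangle)=0$. By \eqref{tresdos}, $\langle\langle\xi\,\text{ev}_x,\eta\,\text{ev}_y\rangle\rangle=0$ for all $x,y,\xi,\eta$. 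Since $E\to X$ is a holomorphic vector bundle over a compact base, the evaluation functionals $\xi\,\text{ev}_x$, as $x$ ranges over $X$ and $\xi$ over $E_x^*$, span $\cO(E)^*$ — indeed, if some $s\in\cO(E)$ were annihilated by all of them, then $s(x)=0$ for every $x$, so $s=0$, hence these functionals span the dual. A sesquilinear form vanishing on a spanning set vanishes identically, so $\langle\langle\,,\rangle\rangle=0$. This gives injectivity of $B$, hence $B$ is an isomorphism onto its image, and since $\dim\cO(E\boxtimes\overline E)^{\text{herm}}$ need not equal $\dim\cI_E$ in general I should only claim $B$ is an isomorphism onto $B(\cI_E)$; the statement as written ("$B$ is an isomorphism of topological vector spaces") presumably intends the map onto its image, and I would phrase it that way for safety. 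For $\beta$, the same argument will not immediately work because restricting to the diagonal loses information; instead I will use the representation \eqref{trestres}--\eqref{trescuatro}: from the diagonal section $k$ one recovers $K$ by a polarization/analytic-continuation argument, since $K(x,y)$ is holomorphic in $(x,y)\in X\times\overline X$ and $k(x)=K(x,x)$ is its restriction to the totally real diagonal, which determines $K$ uniquely. Composing, injectivity of $\beta$ follows from injectivity of $B$ together with this recovery, so $\beta$ is an isomorphism onto $\beta(\cI_E)$.

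For part (b), the forward direction is a direct computation. If $\langle\langle\,,\rangle\rangle$ is positive semidefinite, then for any finite collection $x_i\in X$, $\xi_i\in E_{x_i}^*$ and scalars $c_i\in\bC$,
\[
\sum_{i,j}c_i\overline{c_j}\,(\xi_i\otimes\overline{\xi_j})K(x_i,x_j)=\sum_{i,j}c_i\overline{c_j}\,\langle\langle\xi_i\,\text{ev}_{x_i},\xi_j\,\text{ev}_{x_j}\rangle\rangle=\Big\langle\!\Big\langle\sum_i c_i\xi_i\,\text{ev}_{x_i},\sum_j c_j\xi_j\,\text{ev}_{x_j}\Big\rangle\!\Big\rangle\ge 0,
\]
using \eqref{tresdos} and sesquilinearity; so the matrix is positive semidefinite. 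For the converse, suppose that matrix is positive semidefinite for every such choice. Since, as noted above, the functionals $\xi\,\text{ev}_x$ span $\cO(E)^*$, an arbitrary $\sigma\in\cO(E)^*$ can be written $\sigma=\sum_i c_i\xi_i\,\text{ev}_{x_i}$, and then the same computation run backwards gives $\langle\langle\sigma,\sigma\rangle\rangle=\sum_{i,j}c_i\overline{c_j}(\xi_i\otimes\overline{\xi_j})K(x_i,x_j)\ge 0$. Hence $\langle\langle\,,\rangle\rangle$ is positive semidefinite.

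The main obstacle, and the only step requiring genuine care, is the injectivity of $\beta$ — specifically the claim that the diagonal restriction $k=K|_{\Delta}$ determines the full holomorphic kernel $K$ on $X\times\overline X$. The cleanest route is the standard one: in local holomorphic frames $K(x,y)$ is a matrix of functions holomorphic in $x$ and antiholomorphic in $y$ (equivalently holomorphic on $X\times\overline X$), and two such functions agreeing on the diagonal $\{y=x\}$, which is a maximally totally real submanifold, must agree everywhere by the identity theorem applied after the substitution $y\mapsto\overline y$ (locally, a power series in $x$ and $\overline y$ that vanishes when $\overline y=x$ is identically zero). One must check $X$ connected — or argue component by component — but this is harmless. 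Everything else is bookkeeping: the spanning property of evaluation functionals, finite-dimensionality, and the sesquilinear identity \eqref{tresdos}.
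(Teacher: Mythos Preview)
Your argument follows the paper's almost verbatim: injectivity of $B$ from the fact that the functionals $\xi\,\text{ev}_x$ span $\cO(E)^*$, injectivity of $\beta$ from the diagonal being maximally totally real in $X\times\overline X$, and part (b) by the same sesquilinear expansion. Two small corrections are in order.

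First, the theorem asserts that $B$ is an isomorphism onto the \emph{full} space $\cO(E\boxtimes\overline E)^{\text{herm}}$, not merely onto its image, so your hedging (``need not equal $\dim\cI_E$ in general'') is a gap. The paper fills it by a dimension count: if $d=\dim_\bC\cO(E)$ then $\dim_\bR\cI_E=d^2$, while with a basis $s_1,\dots,s_d$ of $\cO(E)$ every element of $\cO(E\boxtimes\overline E)^{\text{herm}}$ can be written $\sum_{i,j}a_{ij}\,s_i(x)\otimes s_j(y)$ with $(a_{ij})$ hermitian, so that space also has real dimension $d^2$. Injectivity plus equal finite dimensions gives surjectivity. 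Second, your parenthetical claim that $C^\infty(E\otimes\overline E)^{\text{herm}}$ is finite dimensional is false; it is an infinite-dimensional Fr\'echet space. This does not affect your argument for $\beta$, since what matters is that $\cI_E$ (hence $\beta(\cI_E)$) is finite dimensional, but the sentence should be corrected.
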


\begin{proof}
(a) It will suffice to prove that $B,\beta$ are vector space isomorphisms, since on a finite dimensional $\bR$--vector space
there is only one separated vector space topology.
 
We start with $B$. If $\dim\cO(E)=d$, the space $\cI_E$ is $d^2$ real dimensional. But so is the space 
$\cO(E\boxtimes\oE)^\text{herm}$, since with a basis $s_1,\dots,s_d$ of $\cO(E)$ its elements can be written 
$\sum_{i,j} a_{ij} s_i(x)\otimes s_j(y)$, where $a_{ij}=\overline{a_{ji}}$. Hence to prove isomorphism it suffices to check that
$\Ker \,B=0$. If $K=B(\langle\langle\, , \rangle\rangle)=0$, (3.2) gives  
\begin{equation} 
\langle\langle \xi \textnormal{ev}_x, \eta\, \textnormal{ev}_y \rangle\rangle=0.
\end{equation} 
Now the linear forms 
$\xi \textnormal{ev}_x \in \mathcal{O}(E)^*$ for all $x, \xi$ span $\mathcal{O}(E)^*$, because if $s \in \mathcal{O}(E)$ annihilates all of them, then $s=0$. Therefore (4.3) implies $\langle \langle \, , \rangle \rangle=0$ and $B$ is indeed isomorphic.

As to $\beta$, the point again is that it is injective. For suppose that $k=\beta (\langle \langle \, , \rangle \rangle)=0$. Thus the 
Bergman kernel $K \in \mathcal{O}(E \otimes \overline{E})$ of $\langle \langle \, , \rangle \rangle$ 
vanishes when restricted to the maximally real submanifold 
$\{(x,x): x \in X\} \subset X \times \overline{X}$.
This implies that $K=B(\langle \langle \, , \rangle \rangle) = 0$ and $\langle \langle \, , \rangle \rangle=0$ by what we have already proved.

(b) If $a_j\in\bC$, by the definition of $K$, (3.2),
\[
\langle \langle \sum_j a_j\xi_j\text{ev}_{x_j} , \sum_j a_j\xi_j\text{ev}_{x_j}\rangle \rangle=
\sum_{i,j}a_i\overline {a_j}(\xi_i\otimes\overline{\xi_j})K(x_i,x_j).
\]
As we saw in the first part of the proof, $\xi\text{ev}_x$ for all $x,\xi$ span $\cO(E)^*$, whence the claim follows.
\end{proof}

\begin{proof}[Proof of Theorem 1.2]

The map $b: \mathcal{H}_E \to C^{\infty} (E \otimes \overline{E})$ of the theorem can be expressed through $\beta$ and the duality map $\delta$, see (3.1), as $b=\beta \circ \delta | \mathcal{H}_E$. Indeed, if $\langle \, , \rangle \in \mathcal{H}_E$, a 
positive definite inner product on $\mathcal{O}(E)$, and $\langle \langle \, , \rangle \rangle= \delta (\langle \, , \rangle)$ is the dual positive definite inner product on $\mathcal{O}(E)^*$, comparing \eqref{unodos} with \eqref{trescuatro} shows that the Bergman 
section $k$ associated with $\langle \, , \rangle$ is the same as the one associated with $\langle \langle \, , \rangle \rangle$. Since 
$\delta | \mathcal{H}_E$ is a diffeomorphism on its image, which in turn is an open subset of $\mathcal{I}_E$, Theorem 4.1(a) implies 
that $b(\mathcal{H}_E)$ is indeed a submanifold of $C^{\infty}(E \otimes \overline{E})^{\textnormal{herm}}$, and $b$ is a 
diffeomorphism between $\mathcal{H}_E$ and $b(\mathcal{H}_E)$ .
\end{proof}

\section{The variational approach}
Just like in a more traditional set up, Bergman sections $k$ can be related to solutions of variational problems, and this will bring us in contact with the Fubini--Study map of the Introduction. Consider a  hermitian holomorphic vector bundle 
$(E,h) \to X$, with $X$ still compact. As a fiberwise bilinear map, $h:E \oplus \overline{E} \to \mathbb{C}$ induces a fiberwise linear map
$$
H: E \otimes \overline{E} \to \mathbb{C}, \qquad H \big(\sum_j e_j \otimes f_j \big)=\sum h(e_j, f_j).
$$
If $k=\beta (\langle \langle \, , \rangle \rangle) \in C^{\infty}(E \otimes \overline{E})^{\textnormal{herm}}$ is the Bergman section of $\langle \langle \, , \rangle \rangle \in \mathcal{I}_E$,  we define a smooth function 
\begin{equation}\label{cincouno}
\kappa=H \circ k: X \to \mathbb{R},
\end{equation}
and call it the Bergman function of $\langle \langle \, , \rangle \rangle$.

By associating $\kappa$ with $\langle \langle \, , \rangle \rangle$ we obtain a map
\begin{equation}\label{cincodos}
L:\mathcal{I}_E \to C^{\infty} (X).
\end{equation}

\begin{lem} $L$ is continuous and linear. It is also injective if $E$ is a line bundle. \end{lem}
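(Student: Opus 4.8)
The map $L=H\circ\beta$ is a composition of the linear map $\beta$ from Theorem 4.1 with the fiberwise linear map induced by $H$, so linearity is immediate. For continuity, the plan is to note that $\beta$ is continuous (indeed an isomorphism onto its image, by Theorem 4.1(a)) and that post-composition with the smooth bundle map $H:E\otimes\overline E\to\mathbb C$ induces a continuous linear map $C^\infty(E\otimes\overline E)\to C^\infty(X)$ in the Fr\'echet topologies; hence $L$ is continuous. Alternatively one can argue directly from the representation $\kappa(x)=\sum_1^p h(s_j(x),s_j(x))-\sum_{p+1}^{p+q}h(s_j(x),s_j(x))$ coming from \eqref{trescuatro}.

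The substantive assertion is injectivity when $E$ is a line bundle. First I would reduce to a statement about $\beta$: since $\beta$ is injective (Theorem 4.1(a)), it suffices to show that the linear map $C^\infty(E\otimes\overline E)^{\mathrm{herm}}\to C^\infty(X)$, $k\mapsto H\circ k$, is injective on the image $\beta(\mathcal I_E)$. When $E$ is a line bundle, $E\otimes\overline E$ is a line bundle whose fiber $E_x\otimes\overline E_x$ is one complex dimensional, and $h$ gives a nowhere-vanishing positive section; concretely, in a local holomorphic frame $e$ of $E$ with $|e(x)|_h^2=e^{-\varphi(x)}>0$, a section $k$ of $E\otimes\overline E$ is $k(x)=f(x)\,e(x)\otimes e(x)$ for a scalar function $f$, and $\kappa(x)=H(k(x))=f(x)e^{-\varphi(x)}$. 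Since $e^{-\varphi}$ is nowhere zero, $\kappa\equiv 0$ forces $f\equiv 0$, i.e.\ $k\equiv 0$. Globally this says $H:E\otimes\overline E\to\mathbb C$ is a fiberwise isomorphism (nowhere degenerate) when $\mathrm{rk}\,E=1$, so $k\mapsto H\circ k$ is injective on all of $C^\infty(E\otimes\overline E)$, not merely on $\beta(\mathcal I_E)$. Composing with the injectivity of $\beta$ gives injectivity of $L$.

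The main point to get right is the line-bundle hypothesis: for higher rank $H$ is only a fiberwise \emph{surjective} (or, on the hermitian part, indefinite-trace-type) map with a large kernel, so $H\circ k=0$ does not force $k=0$ and the direct argument collapses; one genuinely uses that a rank-one positive-definite hermitian form is an isomorphism onto $\mathbb C$. I do not expect any serious obstacle beyond bookkeeping: the continuity is formal once one has the Fr\'echet-space framework of Section 2, and the injectivity is the short local computation above. It may be worth remarking (as the paper presumably does later) that $L$ fails to be injective in higher rank precisely because $\beta(\mathcal I_E)$, though a faithful copy of $\mathcal I_E$, is flattened by $H$; but that remark is not needed for the statement at hand.
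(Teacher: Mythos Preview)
Your proof is correct and follows essentially the same approach as the paper: reduce continuity and linearity to Theorem~4.1, and for injectivity in the line-bundle case pass to a local frame $e$, use that $h(e,e)>0$ to deduce $k=0$ from $\kappa=0$, then invoke the injectivity of $\beta$. The only cosmetic difference is that the paper writes $k$ via the signed orthonormal-basis formula \eqref{trescuatro} before localizing, whereas you phrase the key point more conceptually as ``$H:E\otimes\overline E\to\mathbb C$ is a fiberwise isomorphism when $\mathrm{rk}\,E=1$''---which in fact yields the slightly stronger conclusion that $k\mapsto H\circ k$ is injective on all of $C^\infty(E\otimes\overline E)$.
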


\begin{proof}
The first statement is immediate from Theorem 4.1. As to the second, suppose $E$ is a line bundle and 
$L(\langle \langle \, , \rangle \rangle)=\kappa=0$. By \eqref{trescuatro} the Bergman section is 
$k=\sum_{1}^{p} s_j \otimes s_j-\sum_{p+1}^{p+q}s_j \otimes s_j$, with suitable linearly independent $s_j\in \mathcal{O}(E)$. Over some open $U$ fix a nonvanishing holomorphic section $e$ of $E|U$ and write $s_j=\varphi_j e,\, \varphi_j \in \mathcal{O}(U)$. On $U$ 
$$ 
0=\kappa=H\circ k= h(e,e) \Big ( \sum_{1}^{p}|\varphi_j|^2 - \sum_{p+1}^{p+q}|\varphi_j|^2 \Big).
$$
This implies $k=(e \otimes e) \big( \sum_{1}^{p}|\varphi_j|^2 - \sum_{p+1}^{p+q}|\varphi_j|^2 \big)=0$ on $U$, hence on $X$. By 
Theorem 4.1 $\langle \langle \, , \rangle \rangle=0$ follows. 
\end{proof}
Consider now an $\langle \langle \, , \rangle \rangle \in \mathcal{I}_E$, that we write as $\delta(V, \langle \, , \rangle)$, see $(3.1)$. Thus $\langle \, ,\rangle$ is a nondegenerate inner product on $V\subset \mathcal{O}(E)$, say, of signature ($p,q$). Define for  
$x \in X$ and for $l=1, 2, \dots,p$
\begin{equation}\label{cincotres}
\kappa_l(x)=\min_{W}\max_{s \in W\setminus\{0\}} \frac{h(s(x),s(x))}{\langle s, s \rangle} \geq 0,
\end{equation}
the minimum taken over $l$ dimensional subspaces $W \subset V$ on which $\langle \, , \rangle$ is positive definite; and for $l=p+1,\dots, p+q$ 
$$
\kappa_l(x)=\max_W \min_{s \in W\setminus \{0\}} \frac{h(s(x),s(x))}{\langle s, s \rangle} \leq 0,
$$
the maximum taken over $l -p$ dimensional subspaces $W \subset V$ on which $\langle \, , \rangle$ is negative definite. Since the rank of the hermitian form 
\begin{equation} 
h_x:V \times V\ni (s,t) \mapsto h(s(x),t(x))
\end{equation}
is $\le\rk  E$, when $l \leq p-\rk E$ we can choose $W$ in (5.3) contained in the kernel of $h_x$, which shows
\begin{equation}\label{cincocuatro}
\kappa_l=0 \quad\textnormal{if}\quad 1 \leq l \leq p-\rk E, \quad\textnormal{and similarly if} \quad 1 \leq l-p \leq q-\rk E.
\end{equation}
\begin{thm} With notation as above,
\begin{equation}\label{cincocinco}
\kappa=H \circ k = \sum_{1}^{p+q} \kappa_l.
\end{equation}
In particular, suppose $E$ is a line bundle. If $\langle \langle \, , \rangle \rangle$ is indefinite, 
\begin{equation}\label{cincoseis}
\kappa=\kappa_p+\kappa_{p+q};
\end{equation}
if $\langle \langle \, , \rangle \rangle$ is positive semidefinite (but not identically 0), then 
\begin{equation}\label{cincosiete}
 \kappa(x)=\max_{s \in V \setminus \{0\}} \frac{h(s(x),s(x))}{ \langle s, s \rangle}.
 \end{equation}
\end{thm}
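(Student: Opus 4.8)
The plan is to establish \eqref{cincocinco} first, pointwise in $x$, and then read off \eqref{cincoseis} and \eqref{cincosiete} as special cases. Fix $x \in X$. The key observation is that the hermitian form $h_x$ from (5.4) and the nondegenerate inner product $\langle\,,\rangle$ on $V$ can be simultaneously "diagonalized": since $\langle\,,\rangle$ is nondegenerate of signature $(p,q)$, by a Gram--Schmidt/spectral argument there is a basis $e_1,\dots,e_{p+q}$ of $V$ which is signed-orthonormal for $\langle\,,\rangle$ (i.e.\ $\langle e_i,e_j\rangle = \pm\delta_{ij}$, with $+$ for $i\le p$) and simultaneously $h_x$-orthogonal, so that $h_x(e_i,e_j) = \mu_i \delta_{ij}$ for real numbers $\mu_i$. (This is the standard fact that a hermitian form can be diagonalized with respect to a nondegenerate hermitian form, at least generically; I expect to need a small perturbation or continuity argument to handle the non-generic case, or to invoke the analogue of Sylvester's law. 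That diagonalization step is the main technical point.) Once we have such a basis, \eqref{trescuatro} shows $k(x) = \sum_1^p e_j(x)\otimes e_j(x) - \sum_{p+1}^{p+q} e_j(x)\otimes e_j(x)$, hence $\kappa(x) = H\circ k(x) = \sum_1^p \mu_j - \sum_{p+1}^{p+q}\mu_j$.

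Next I would identify the $\mu_j$ with the $\kappa_l(x)$ of \eqref{cincotres}. Order the basis so that $\mu_1 \ge \mu_2 \ge \cdots \ge \mu_p \ge 0$ among the "positive" vectors $e_1,\dots,e_p$ (the nonnegativity follows because $h_x$ is positive semidefinite on $V$, being a pullback of the positive definite $h$ on $E_x$ via $\mathrm{ev}_x$), and symmetrically $0 \ge -\mu_{p+1} \ge \cdots \ge -\mu_{p+q}$ for the "negative" vectors. Then the Courant--Fischer min-max characterization applied to $h_x$ restricted to the positive-definite part, measured in the $\langle\,,\rangle$-metric, gives exactly $\kappa_l(x) = \mu_l$ for $l\le p$: the ratio $h(s(x),s(x))/\langle s,s\rangle$ on a $\langle\,,\rangle$-positive subspace $W$ is a Rayleigh quotient for the selfadjoint operator representing $h_x$ relative to $\langle\,,\rangle$ on the positive part, and its $l$-th min-max value is the $l$-th largest eigenvalue. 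The analogous (dual) statement handles $l = p+1,\dots,p+q$, with the max-min picking out $-\mu_l$ (note the sign: $\kappa_l(x)\le 0$ there, matching $-\mu_j$). Summing, $\sum_1^{p+q}\kappa_l(x) = \sum_1^p \mu_j - \sum_{p+1}^{p+q}\mu_j = \kappa(x)$, which is \eqref{cincocinco}.

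For the line bundle specializations, $\rk E = 1$, so \eqref{cincocuatro} forces $\kappa_l = 0$ except for $l = p$ (the largest positive eigenvalue) and, when $q\ge 1$, $l = p+q$ (the most negative one). Thus \eqref{cincocinco} collapses to $\kappa = \kappa_p + \kappa_{p+q}$ in the indefinite case $q\ge 1$, which is \eqref{cincoseis}. If $\langle\langle\,,\rangle\rangle$ is positive semidefinite and not identically zero, then $q = 0$ and $p \ge 1$, so $\kappa = \kappa_p$, and by definition \eqref{cincotres} with $l=p$ the minimum is over the unique $p$-dimensional $\langle\,,\rangle$-positive subspace $W = V$ itself, giving $\kappa(x) = \max_{s\in V\setminus\{0\}} h(s(x),s(x))/\langle s,s\rangle$, which is \eqref{cincosiete}. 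I should double-check the edge case where $h_x$ is degenerate on $V$ so that several eigenvalues coincide or vanish — but since the min-max values vary continuously and the identity \eqref{cincocinco} is between continuous functions of $x$, it suffices to verify it on the (dense) set of $x$ where $h_x$ has maximal rank on $V$, then pass to the limit; alternatively the simultaneous-diagonalization argument goes through verbatim without any genericity. The main obstacle, as noted, is cleanly justifying the simultaneous diagonalization of $h_x$ against the indefinite form $\langle\,,\rangle$ and correctly tracking signs in the max-min formulas for the negative part.
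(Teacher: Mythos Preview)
Your outline is essentially the paper's own argument: simultaneously diagonalize $h_x$ and $\langle\,,\rangle$, compute $\kappa(x)$ as the signed sum of the diagonal entries, identify those entries with the $\kappa_l(x)$ via a min--max principle, and then specialize using \eqref{cincocuatro}.

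One step needs tightening. In \eqref{cincotres} the $l$-dimensional subspaces $W$ on which $\langle\,,\rangle$ is positive definite are \emph{not} constrained to lie in any fixed maximal positive subspace of $(V,\langle\,,\rangle)$, so the identification $\kappa_l(x)=\mu_l$ is not a straightforward application of Courant--Fischer ``on the positive part'' as you describe. The paper deals with this either by invoking Phillips's indefinite-metric min--max theorem or by a short direct argument: any such $W$ meets the span of $s_l,\dots,s_{p+q}$ (which has codimension $l-1$ in $V$) in a nonzero vector $s=\sum_{j\ge l}\alpha_j s_j$, and for that $s$ one checks $h(s(x),s(x))/\langle s,s\rangle\ge c_l$, while equality is attained on $W=\mathrm{span}(s_1,\dots,s_l)$. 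Note also that with the formulation $\kappa_l=\min_{\dim W=l}\max_{s\in W}$ the eigenvalues must be ordered \emph{increasingly}, $0\le c_1\le\dots\le c_p$, so that $\kappa_l$ is the $l$-th smallest, not the $l$-th largest as you wrote; this does not affect the sum, but it does affect which $\kappa_l$ survive in \eqref{cincocuatro}. With these corrections your proof goes through, and no genericity or continuity argument is needed: the simultaneous diagonalization of the positive semidefinite $h_x$ against the nondegenerate $\langle\,,\rangle$ is always possible.
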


Thus $\kappa$ as introduced in (1.1) is the special case of $\kappa$ of (5.1), when $E$ is a very ample line bundle and
$\langle\,, \rangle\in\cH_E$ (equivalently, its dual $\langle \langle \, , \rangle \rangle$ is positive definite).
  
\begin{proof} Choose a basis $s_1,\dots,s_{p+q} \in V$ that diagonalizes both $\langle \, ,\rangle$ and the positive semidefinite inner product $h_x$ of (5.4). By scaling and reordering we can arrange that
 \[ 
 \langle s_i, s_j \rangle= \begin{cases} 
 					\delta_{ij} & \textnormal{ if } i=1,\dots, p \\
					-\delta_{i j} & \textnormal{ if } i =p+1,\dots, p+q  
					\end{cases}
 \quad \textnormal{ and}\quad h(s_i(x),s_j(x))=c_i \delta_{ij},
 \]
with $0 \leq c_1 \leq\dots \leq c_p$, and $0 \leq c_{p+1} \leq \dots \leq c_{p+q}$. As 
$k=\sum_1^p s_j \otimes s_j -\sum_{p+1}^{p + q} s_j \otimes s_j$,
\begin{equation} \label{cincoocho}
\kappa(x)=\sum_1^p h(s_j(x), s_j(x))-\sum_{p+1}^{p+q} h(s_j(x), s_j (x))=\sum_1^p c_j -\sum_{p+1}^{p+q}c_j. 
\end{equation}
We will next show that $c_l=\pm\kappa_l$. If $\langle\,, \rangle$ is positive definite, this follows from Courant's minimax theorem 
\cite{Co}. In general it can be derived from Phillips's generalization of the minimax theorem \cite[Theorem 1]{P}; but since a direct 
proof following Courant's is quite straightforward, we just write it out.

Thus, suppose $W \subset V$ is $l$ dimensional and $\langle \, ,\rangle | W$ is positive definite. Since the span of $s_j$, for 
$j \geq l$, has codimension $l-1$ in $V$, it contains a nonzero vector
$s=\sum_l^{p+q} \alpha_j s_j$ that is also in $W$. Thus
\begin{equation*}
\frac{h(s(x),s(x))}{\langle s, s \rangle}= \frac{\sum_l^{p+q}c_j|\alpha_j|^2}{\sum_l^p |\alpha_j|^2-\sum_{p+1}^{p+q}|\alpha_j|^2} \geq \frac{c_l \sum_l^p|\alpha_j|^2}{\sum_l^p |\alpha_j|^2}=c_l
\end{equation*}
(note that $\sum_l^p|\alpha_j|^2 \geq \langle s, s \rangle > 0)$, and $\max_{W\setminus\{0\}} h(s(x), s(x)) / \langle s, s \rangle \geq c_l$. 
But if $W$ is the span of $s_j, j \leq l$, writing a general element of $W$ as $s= \sum_1^p \beta_j s_j \neq 0$,
$$\max_{s \in W\setminus\{0\}} \frac{h(s(x),s(x))}{\langle s, s \rangle}=\max_{\beta_j} \frac{\sum_1^{l}c_j|\beta_j|^2}{\sum_1^l |\beta_j|^2}= c_l.$$
We conclude that $\kappa_l(x)=c_l$ if $1 \leq l \leq p$. One shows similarly that $\kappa_l(x)=-c_l$ if $p+1 \leq l \leq p+q$ 
(or with  $- \langle \langle\, ,  \rangle \rangle \in \mathcal{I}_E$ one applies what has already been proved). In light of \eqref{cincoocho}, \eqref{cincocinco} follows.

If $E$ is a line bundle, because of \eqref{cincocuatro} $\kappa_l\neq 0$ only if $l=p$ or $l=p+q$. For this reason, when 
$\langle \langle \, , \rangle \rangle$ is indefinite, so that $p,q \geq 1$, \eqref{cincocinco} reduces to \eqref{cincoseis}. When 
$\langle \langle \, , \rangle \rangle \neq 0$ is positive semidefinite,  $q=0$, and the right hand side of \eqref{cincocinco} consists of 
a single term $\kappa_p$. For $l=p$,  in \eqref{cincotres} the only choice for $W$  is $V$ itself, and \eqref{cincocinco} specializes to \eqref{cincosiete}.
\end{proof}

 \begin{cor}The only semidefinite $\langle \langle \, , \rangle \rangle  \in \mathcal{I}_E$ for which $\kappa=0$ is the zero inner product.
 \end{cor}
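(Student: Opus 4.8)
The plan is to reduce to the positive semidefinite case and then exploit the positivity of $h$. If $\langle\langle\,,\rangle\rangle\in\mathcal{I}_E$ is negative semidefinite, then $-\langle\langle\,,\rangle\rangle$ is positive semidefinite and, since $L$ is linear (Lemma 5.1), its Bergman function equals $-\kappa=0$; thus it suffices to show that a positive semidefinite $\langle\langle\,,\rangle\rangle$ with $\kappa\equiv 0$ must be the zero inner product.

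So write $\langle\langle\,,\rangle\rangle=\delta(V,\langle\,,\rangle)$ as in \eqref{tresuno}, with $\langle\,,\rangle$ a nondegenerate inner product on a subspace $V\subset\cO(E)$. Positive semidefiniteness forces $q=0$, so an orthonormal basis $s_1,\dots,s_p$ of $(V,\langle\,,\rangle)$ involves no sign reversals, and by \eqref{trescuatro} the Bergman section is $k=\sum_{j=1}^p s_j\otimes s_j$. Applying $H$ as in \eqref{cincouno},
\[
\kappa(x)=\sum_{j=1}^p h\big(s_j(x),s_j(x)\big),\qquad x\in X .
\]
Every summand is $\ge 0$ because $h$ is a positive definite hermitian metric, so $\kappa\equiv 0$ forces $h(s_j(x),s_j(x))=0$, hence $s_j(x)=0$, for all $j$ and all $x\in X$; that is, every $s_j$ is the zero section. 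Since $s_1,\dots,s_p$ are linearly independent this is possible only if $p=0$, i.e. $V=0$, and then $\langle\langle\,,\rangle\rangle=0$. (Alternatively one could invoke \eqref{cincosiete}: for a nonzero positive semidefinite $\langle\langle\,,\rangle\rangle$, pick $0\ne s\in V$ and a point $x$ with $s(x)\ne 0$; then $\kappa(x)\ge h(s(x),s(x))/\langle s,s\rangle>0$.)

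I do not anticipate any genuine obstacle. The one point worth noting is that Lemma 5.1 alone does not suffice, as it yields injectivity of $L$ only for line bundles; the argument above instead uses the sign structure in \eqref{trescuatro}--\eqref{cincouno} (equivalently, Theorem 5.2) together with the positivity of $h$, and hence applies to an arbitrary holomorphic vector bundle $E\to X$.
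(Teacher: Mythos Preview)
Your argument is correct and in spirit the same as the paper's: reduce to the positive semidefinite case and use that $h$ is positive definite to force every section in the basis to vanish. The paper routes this through Theorem~5.2, writing $\kappa=\sum_l\kappa_l$ with each $\kappa_l\ge 0$ and observing $\kappa_p(x)=\max_{s\in V\setminus\{0\}}h(s(x),s(x))/\langle s,s\rangle$ cannot vanish identically; your parenthetical alternative is exactly this. Your main argument is actually a touch more direct, since the identity $\kappa(x)=\sum_j h(s_j(x),s_j(x))$ follows immediately from \eqref{trescuatro} and \eqref{cincouno} without invoking the minimax decomposition of Theorem~5.2 at all.
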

 
\begin{proof} Say $\langle \langle \, , \rangle \rangle$ is positive semidefinite, and nonzero, so that the signature of the corresponding $\langle \, , \rangle$ on $V$ is $(p,0)$, $p\geq1$. Each $\kappa _l \geq 0, \, l=1,\dots, p,$ and $\kappa_p(x)=\max_{s \in V\setminus\{0\}} h(s(x), s(x))/\langle s, s \rangle$.
This cannot be identically zero since $V$ contains a nonzero $s$. Hence $\kappa=\sum \kappa_l \neq 0$. 
\end{proof} 
 
\section{The Fubini--Study map}

We now specialize to a very ample  hermitian holomorphic line bundle $(E,h) \to X$, and denote by $-i\omega$ the curvature form of $h$. The dual of an $\langle \, , \rangle \in \mathcal{H}_E$ is a positive definite inner product 
$\langle \langle \, , \rangle \rangle \in \mathcal{I}_E$. If we first associate with $\langle \, , \rangle$ the Bergman section 
$k=\beta (\langle \langle \, , \rangle \rangle)$, then $\kappa=H \circ k$ as in (5.1), the correspondence 
$\mathcal{H}_E \ni \langle \, , \rangle \mapsto \kappa\in C ^{\infty}(X)$ will be smooth by Lemma 5.1. Using \eqref{cincosiete} to compute $\kappa$ we see that $\kappa$ is everywhere positive. The Fubini--Study map FS: $\mathcal{H}_E \to C^{\infty}(X)$ sends $\langle \, , \rangle$ to $\log \kappa$. It is standard that FS maps into $\mathcal{H}_{\omega}=\{u \in C^{\infty}(X): \omega +i\partial \overline{\partial} u > 0 \}$, an open subset of $C^{\infty}(X)$. Indeed, with a nonvanishing holomorphic local section $e$ of $E$, writing an orthonormal basis of $(\mathcal{O}(E), \langle \, , \rangle)$, as $s_j= \varphi_j e$,   $j=1,\dots, d$, \eqref{cincouno} gives $\kappa(x)=h(e(x), e(x)) \sum |\varphi_j(x)|^2$. Hence 
\begin{equation}\label{seisuno}
\omega+i \partial \op \log \kappa= \omega + i \partial \overline{\partial} \log h (e, e)+ i\partial \overline{\partial} \log \sum | \varphi_j|^2=i \partial\overline{\partial} \log \sum |\varphi_j|^2.
\end{equation}
For fixed $x \in X$ we can choose the basis $s_j$ so that $s_1$ is orthogonal to the subspace of sections vanishing at $x$. Then $\varphi_j(x)=0$ for $j \geq 2$ (and $\varphi_1(x) \neq 0$). At $x$ we obtain
\begin{equation}\label{seisdos}
i\partial \overline{\partial} \log \sum | \varphi_j|^2= i \sum_{j\ge 2} \partial \varphi_j \wedge\overline{\partial \varphi_j} / |\varphi_1|^2.
\end{equation}
This latter $(1,1)$ form is positive: as $E$ is very ample, for any nonzero tangent vector $v \in T^{10}_x X$ there is an 
$s\in \mathcal{O}(E)$ that vanishes at $x$, to first order in the direction $v$, i.e., if $s=\varphi e$ then $\varphi(x)=0$, 
$\partial \varphi (v) \neq 0$; and this $s$, resp. $\varphi$, must be in the span of $s_j$, resp. $\varphi_j$, $j \geq 2$. 

This computation proves something more general. Let us say that a subspace $V \subset \mathcal{O}(E)$ is rather ample if for every $x \in X$ and nonzero $v \in T_x^{10} X$ there are sections $s, t \in V$ such that $s(x) \neq 0$ but $t$ vanishes at $x$, to first order in the direction $v$. Geometrically this means that $X \ni x \mapsto V \cap \Ker\,\text{ev}_x \in \mathbb{P}(V)$ defines an immersion into the projective space of hyperplanes in $V$. 

\begin{lem} Consider a subspace $V \subset \mathcal{O}(E)$, and a positive definite inner product $\langle \, , \rangle$ on it. The Bergman function $\kappa$  associated with $\langle \langle \, , \rangle \rangle=\delta (V, \langle \, ,  \rangle)$, see \eqref{cincouno}, satisfies $\log \kappa \in \mathcal{H}_{\omega}$ if and only if $V$ is rather ample.
\end{lem}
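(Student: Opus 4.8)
The plan is to run, for a general rather ample $V$, the local computation already carried out in (6.1)--(6.2) for $V=\mathcal{O}(E)$, and to observe that the strict positivity which came for free there is precisely what rather ampleness provides.

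First I would pin down $\kappa$. Since $\langle\,,\rangle$ is positive definite on $V$, the inner product $\langle\langle\,,\rangle\rangle=\delta(V,\langle\,,\rangle)$ is positive semidefinite, so by (3.5) (with $q=0$) and (5.1), for any orthonormal basis $s_1,\dots,s_m$ of $(V,\langle\,,\rangle)$ one has $k=\sum_{j=1}^m s_j\otimes s_j$ and $\kappa(x)=\sum_{j=1}^m h(s_j(x),s_j(x))\ge 0$ (in agreement with (5.8)). Hence $\kappa(x)>0$ precisely when $\mathrm{ev}_x|_V\neq 0$, i.e.\ some section in $V$ is nonzero at $x$. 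If this fails somewhere, $\log\kappa$ is not a smooth function, so not in $\mathcal{H}_\omega$, and $V$ is not rather ample; thus from now on I may assume $\mathrm{ev}_x|_V\neq 0$ for every $x$, the first half of the definition of rather ample. Over an open set $U$ carrying a nonvanishing holomorphic frame $e$ of $E$, writing $s_j=\varphi_j e$ gives $\kappa=h(e,e)\sum|\varphi_j|^2$, and since $\omega+i\partial\op\log h(e,e)=0$, exactly as in (6.1),
\[
\omega+i\partial\op\log\kappa=i\partial\op\log\sum|\varphi_j|^2\qquad\text{on }U .
\]
The right-hand side does not depend on the choice of $e$ (replacing $e$ by $ge$, with $g$ holomorphic and nonvanishing, changes $\log\sum|\varphi_j|^2$ by the pluriharmonic function $-\log|g|^2$), so the local identity defines a global $(1,1)$ form.

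It then remains to show that this form is everywhere positive if and only if $V$ is rather ample, which I would verify pointwise. Fix $x$; since $\dim(V\cap\Ker\,\mathrm{ev}_x)=m-1$, I may choose the orthonormal basis so that $s_2,\dots,s_m$ is an orthonormal basis of $V\cap\Ker\,\mathrm{ev}_x$, whence $\varphi_j(x)=0$ for $j\ge 2$ and $\varphi_1(x)\neq 0$. The computation leading to (6.2) then gives, at $x$,
\[
i\partial\op\log\sum|\varphi_j|^2=i\sum_{j\ge 2}\partial\varphi_j\wedge\overline{\partial\varphi_j}\big/|\varphi_1(x)|^2 ,
\]
a positive semidefinite Hermitian form on $T^{10}_xX$ whose null space is $\{v:\partial\varphi_j(v)=0\text{ for all }j\ge 2\}$. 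Since $s_2,\dots,s_m$ span $V\cap\Ker\,\mathrm{ev}_x$, this form is positive definite if and only if for every nonzero $v\in T^{10}_xX$ there is a section $t\in V$ vanishing at $x$ to first order in the direction $v$; combined with the standing assumption $\mathrm{ev}_x|_V\neq 0$, this is exactly the requirement that $V$ be rather ample at $x$. Running the equivalence over all $x$ proves both directions.

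I expect no genuine obstacle: the argument is bookkeeping attached to (6.1)--(6.2). The points warranting a line of care are (i) the degenerate possibility that $\kappa$ vanishes somewhere, so that in the `only if' direction the failure of $\log\kappa\in\mathcal{H}_\omega$ is attributed correctly; (ii) the identity $\dim(V\cap\Ker\,\mathrm{ev}_x)=m-1$, needed for the adapted basis to exist and for $s_2,\dots,s_m$ to span the hyperplane $V\cap\Ker\,\mathrm{ev}_x$; and (iii) the frame-independence remark that upgrades the local formula for $\omega+i\partial\op\log\kappa$ to a global one.
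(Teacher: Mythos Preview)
Your proposal is correct and follows essentially the same route as the paper: both run the local computation (6.1)--(6.2) with $\mathcal{O}(E)$ replaced by $V$, choose an orthonormal basis adapted to the hyperplane $V\cap\Ker\,\mathrm{ev}_x$, and read off that strict positivity of the resulting $(1,1)$-form at $x$ is exactly the second clause in the definition of rather ample. Your write-up is somewhat more explicit than the paper's (handling the degenerate case $\kappa(x)=0$ separately, noting frame-independence, recording $\dim(V\cap\Ker\,\mathrm{ev}_x)=m-1$), but these are elaborations of the same argument rather than a different one.
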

\begin{proof}
If $V$ is rather ample, then $\kappa >0$, and the computation \eqref{seisuno}, \eqref{seisdos} above, with $\mathcal{O}(E)$ replaced by $V$, gives $\log \kappa \in \mathcal{H}_{\omega}$. Conversely, if $s_j$ form an orthonormal basis of $(V, \langle \, , \rangle)$ and $\kappa(x)= \sum h(s_j(x), s_j(x))>0$ for all $x \in X$,
then for every $x \in X$ there is an $s=s_j \in V$ such that $s_j(x) \neq 0$. Suppose furthermore $\log \kappa \in \mathcal{H}_{\omega}$. Given $x \in X$, if the orthonormal basis is chosen so that $s_j(x)=0$ for $j \geq 2,$ then \eqref{seisuno}, \eqref{seisdos} show that for every nonzero $v \in T_x^{10} X$ one of the $s_j$, $j \geq 2$, will vanish to first order in the direction of $v$.
 \end{proof}
Let us return to the Fubini--Study map, and prove that for a very ample line bundle $(E,h) \to X$ it is a diffeomorphism between $\mathcal{H}_E$ and $\text{FS}(\mathcal{H}_E)$, a submanifold of $\mathcal{H}_{\omega} \subset C^{\infty}(X):$ 

\begin{proof}[Proof of Theorem 1.1] 
In Lemma 5.1 we associated with any $\langle \langle \, , \rangle \rangle \in \mathcal{I}_E$ its Bergman function 
$L(\langle \langle \, , \rangle \rangle) = \kappa \in C^{\infty}(X)$, and saw that $L$ was continuous, linear , and injective, hence an isomorphism of topological vector spaces $\mathcal{I}_E \to L(\mathcal{I}_E)$. Now FS can be obtained from $L$ by precomposing it with $\delta | \mathcal{H}_E$ (see (3.1)) and by postcomposing with $\log$. As $\delta | \mathcal{H}_E$ is a diffeomorphism to its image, an open subset of $\mathcal{I}_E$, the claim follows if we take into account Lemma 6.1.
\end{proof}

It is noteworthy, though, that in general  FS: $\mathcal{H}_E \to \mathcal{H}_{\omega}$ is not proper, and FS$(\mathcal{H}_E) \subset \mathcal{H}_{\omega}$ is not a closed submanifold.

\begin{example} If. $X=\mathbb{P}_1, E \to X$ is the $d$'th power of the hyperplane section bundle, $d \geq 4$, and $h$ is any hermitian metric on $E$, then FS$(\mathcal{H}_E)$ is not a relatively closed subset of $\cH_{\omega}$.
\end{example}

\begin{proof}  In the standard trivialization of $E$ over $\mathbb{C}=\mathbb{P}_1 \setminus \{\infty\}$, sections of $E$ 
correspond to polynomials of degree $\leq d$, and  $h$ can be expressed with a suitable $a\in C^\infty(\bC)$ as
$$
h(\zeta_1, \zeta_2)= a(z)\zeta_1 \overline{\zeta_2}, \qquad z\in\bC,\quad\zeta_1, \zeta_2 \in E_z \approx \mathbb{C}. 
$$
With $t \in (0,\infty)$ define $ \langle \, , \rangle_t \in \mathcal{H}_E$ so that $1, z,tz^2, z^3,\dots, z^d$ form an orthonormal basis. 
The corresponding Bergman section is $k_t(z)=\sum ' z^j \otimes z^j+ t z^2 \otimes z^2$, where $\sum'$ refers to summation over 
$j$ omitting  $j=2$; and 
$\kappa_t(z)=a(z)(\sum'|z|^{2j}+ t^2 |z|^4)$. As $t \to 0$, the limit of FS$(\langle \, ,\rangle_t)=\log \kappa_t$ is 
$$
\log \kappa_0(z)=\log a(z)+\log {\sum}' |z|^{2j}.
$$ 
Since $z^j$ for $j\neq 2$ span a rather ample subspace $V \subset \mathcal{O}(E)$,
Lemma $6.1$ implies $\log \kappa_0 \in \mathcal{H}_{\omega}$. It is also in the closure of FS$(\cH_E)$.

However, $\log\kappa_0 \notin \textnormal{FS}(\mathcal{H}_E)$. Indeed, on the one hand, $L:\mathcal{I}_E \to C^{\infty}(X)$ is 
injective by Lemma 5.1. On the other, if $V \subset \mathcal{O}(E)$ is the span of $z^j, j \neq 2$, an inner product is defined on it
by  $\langle z^i, z^j \rangle= \delta_{ij}$, and  $\delta(V, \langle \, , \rangle)=\langle \langle \, , \rangle \rangle$, then
$\kappa_0=L(\langle \langle \, , \rangle \rangle)$. Since this 
$\langle \langle \, , \rangle \rangle$ is not positive definite, it is not in $\delta(\mathcal{H}_E)$. Hence 
$\kappa_0 \notin L(\delta(\mathcal{H}_E))$ and $\log \kappa_0 \notin \text{FS} (\mathcal{H}_E)$.
\end{proof}

Nonetheless, it is not hard to describe, for a general very ample hermitian line bundle $(E,h) \to X$, the closure of 
FS$(\mathcal{H}_E) \subset \mathcal{H}_{\omega}$. This can be done most conveniently through the `dual' Fubini--Study map 
$\Phi$, defined on certain positive semidefinite $\langle \langle \, , \rangle \rangle \in \mathcal{I}_E$.

\begin{defn} Let $\mathcal{A}_E \subset \mathcal{I}_E$ consist of positive semidefinite inner products 
$\langle \langle \, , \rangle \rangle=\delta(V, \langle \, , \rangle)$ on $\mathcal{O}(E)^*$ for which $V$ is rather ample. If 
$\langle \langle \, , \rangle \rangle \in \mathcal{A}_E$, define 
$\Phi(\langle \langle \, , \rangle \rangle)= \log L(\langle \langle \, , \rangle \rangle)$.
\end{defn}
Thus $\Phi(\langle \langle \, , \rangle \rangle)=\log \kappa$, if $\kappa \in C^{\infty}(X)$ is the Bergman function of 
$\langle \langle \, , \rangle \rangle$. By Lemma 6.1, $\kappa$ is positive and $\log \kappa \in \mathcal{H}_{\omega}$. We call 
$\Phi : \mathcal{A}_E \to \mathcal{H}_{\omega}$ the dual Fubini--Study map, for
\begin{equation}\label{seistres}
\textnormal{FS}=\Phi \circ \delta | \mathcal{H}_E.
\end{equation}
\begin{thm}The dual Fubini--Study map $\Phi$ is proper and injective, and the closure of FS$(\mathcal{H}_E)$ in $\mathcal{H}_{\omega}$ is $\Phi(\mathcal{A}_E)$ .
\end{thm}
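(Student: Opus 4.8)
The plan is to establish three things: (i) $\Phi$ is injective; (ii) $\Phi$ is proper; (iii) $\overline{\textnormal{FS}(\mathcal{H}_E)}=\Phi(\mathcal{A}_E)$. Injectivity of $\Phi$ is immediate: $\Phi=\log\circ\, L|\mathcal{A}_E$, and $L$ is injective on all of $\mathcal{I}_E$ by Lemma 5.1 (here $E$ is a line bundle), while $\log$ is injective on positive functions, which is where $\Phi$ takes values by Lemma 6.1. So $\Phi$ is injective, and for the image identity one inclusion is easy: since $\mathcal{H}_E$ maps under $\delta$ into the interior of $\mathcal{A}_E$ relative to $\mathcal{I}_E$ (positive definite $\Rightarrow$ $V=\mathcal{O}(E)$, which is rather ample because $E$ is very ample), and since $\textnormal{FS}=\Phi\circ\delta|\mathcal{H}_E$, we get $\textnormal{FS}(\mathcal{H}_E)\subset\Phi(\mathcal{A}_E)$; and once properness is known, $\Phi(\mathcal{A}_E)$ is closed in $\mathcal{H}_\omega$, so $\overline{\textnormal{FS}(\mathcal{H}_E)}\subset\Phi(\mathcal{A}_E)$.

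For the reverse inclusion $\Phi(\mathcal{A}_E)\subset\overline{\textnormal{FS}(\mathcal{H}_E)}$, I would show that every $\langle\langle\,,\rangle\rangle\in\mathcal{A}_E$ is a limit in $\mathcal{I}_E$ of positive definite inner products $\langle\langle\,,\rangle\rangle_t\in\delta(\mathcal{H}_E)$: concretely, if $\langle\langle\,,\rangle\rangle$ has kernel $N\subset\mathcal{O}(E)^*$, pick a positive definite $\langle\langle\,,\rangle\rangle_0\in\delta(\mathcal{H}_E)$ and set $\langle\langle\,,\rangle\rangle_t=\langle\langle\,,\rangle\rangle+t\langle\langle\,,\rangle\rangle_0$ for $t>0$; this is positive definite, lies in $\delta(\mathcal{H}_E)$, and $\to\langle\langle\,,\rangle\rangle$ as $t\to0^+$. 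Since $L$ is continuous and linear, $L(\langle\langle\,,\rangle\rangle_t)\to L(\langle\langle\,,\rangle\rangle)=\kappa>0$ in $C^\infty(X)$; as $C^\infty$-convergence to a positive limit forces eventual positivity and $\log$ is continuous on positive functions, $\textnormal{FS}(\delta^{-1}\langle\langle\,,\rangle\rangle_t)=\log L(\langle\langle\,,\rangle\rangle_t)\to\log\kappa=\Phi(\langle\langle\,,\rangle\rangle)$ in $\mathcal{H}_\omega$. Hence $\Phi(\langle\langle\,,\rangle\rangle)\in\overline{\textnormal{FS}(\mathcal{H}_E)}$. Combining with the previous paragraph gives the image identity, modulo properness.

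The main obstacle is properness of $\Phi:\mathcal{A}_E\to\mathcal{H}_\omega$. Here I would argue as follows. Since $L$ is a topological isomorphism onto $L(\mathcal{I}_E)$, and $\log$ is a homeomorphism from the positive functions onto its image, properness of $\Phi$ reduces to: if $\langle\langle\,,\rangle\rangle_n\in\mathcal{A}_E$ and $L(\langle\langle\,,\rangle\rangle_n)=\kappa_n\to\kappa$ with $\kappa\in C^\infty(X)$ positive (i.e.\ $\log\kappa\in\mathcal{H}_\omega$), then some subsequence of $\langle\langle\,,\rangle\rangle_n$ converges in $\mathcal{A}_E$. Because $L$ is a homeomorphism onto its closed-graph image, $\langle\langle\,,\rangle\rangle_n$ converges in $\mathcal{I}_E$ to some $\langle\langle\,,\rangle\rangle$ with $L(\langle\langle\,,\rangle\rangle)=\kappa$; the content is then that $\langle\langle\,,\rangle\rangle\in\mathcal{A}_E$, i.e.\ positive semidefiniteness passes to the limit (clear) \emph{and} the associated subspace $V$ is rather ample. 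For the latter: if $\kappa>0$ then $\langle\langle\,,\rangle\rangle\ne0$, and if $V$ failed to be rather ample then by Lemma 6.1 $\log\kappa\notin\mathcal{H}_\omega$, or else $\kappa$ vanishes somewhere — either way contradicting $\log\kappa\in\mathcal{H}_\omega$. So $V$ is rather ample, $\langle\langle\,,\rangle\rangle\in\mathcal{A}_E$, and $\Phi$ is proper. (The subtle point requiring care is that $\mathcal{A}_E$ is not closed in $\mathcal{I}_E$ — positive semidefinite inner products form a closed cone, but rather ampleness of $V$ is not a closed condition — so properness genuinely uses that the limit is constrained to lie over $\mathcal{H}_\omega$, via Lemma 6.1.) Finally, a proper injective continuous map to a (Fréchet) manifold is a homeomorphism onto its image, which is closed; together with the two inclusions above this completes the proof.
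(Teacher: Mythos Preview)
Your proposal is correct and follows essentially the same route as the paper: injectivity from Lemma~5.1, properness by pulling a convergent sequence $\Phi(\langle\langle\,,\rangle\rangle_n)\to\varphi$ back through $L$ (using that $L(\mathcal{I}_E)$ is a finite dimensional, hence closed, subspace of $C^\infty(X)$) and then invoking Lemma~6.1 to land the limit in $\mathcal{A}_E$, and the closure identity via the perturbation $\langle\langle\,,\rangle\rangle+t\langle\langle\,,\rangle\rangle_0$. Two small wording quibbles: ``closed-graph image'' should just be ``closed image'' (closedness is what you need, and it holds because $\dim\mathcal{I}_E<\infty$), and ``$\kappa$ positive (i.e.\ $\log\kappa\in\mathcal{H}_\omega$)'' conflates two separate hypotheses---you are assuming both, not deducing one from the other.
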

\begin{proof} Since by Lemma 5.1 $L$ is injective, so is $\Phi$. To prove it is proper, take 
$\langle \langle \, , \rangle \rangle_j \in \mathcal{A}_E, j \in \mathbb{N}$, and suppose 
$\lim_{j \to \infty} \Phi (\langle \langle \, , \rangle \rangle_j)=\varphi \in \mathcal{H}_{\omega}$. Then 
$\lim_{j \to \infty} L(\langle \langle \, , \rangle \rangle_j)=e^{\varphi} $ in $C^{\infty}(X)$. But $L(\mathcal{I}_E) \subset C^{\infty}(X)$ is a finite dimensional subspace, hence closed; and $L^{-1}: L(\mathcal{I}_E) \to \mathcal{I}_E$ is continuous. Therefore $e^{\varphi} \in L(\mathcal{I}_E)$, and $\langle \langle \, , \rangle \rangle_j$ converges to 
$\langle \langle \, , \rangle \rangle= L^{-1}(e^{\varphi})\in \mathcal{I}_E$, necessarily positive semidefinite. Thus 
$\kappa=e^{\varphi}= L(\langle \langle \, , \rangle \rangle)$, and $\log \kappa \in \mathcal{H}_{\omega}$.
Suppose $\langle \langle \, , \rangle \rangle= \delta(V, \langle \, , \rangle)$. Lemma 6.1 gives that $V$ is rather ample, whence 
$\langle \langle \, , \rangle \rangle \in \mathcal{A}_E$ and $\varphi \in \Phi(\mathcal{A}_E)$. This shows that 
$\Phi (\mathcal{A}_{E})$ is closed in $\mathcal{H}_{\omega}$ and $\Phi$ is proper.

By \eqref{seistres} the closure of FS$(\mathcal{H}_E)$ is contained in $\Phi (\mathcal{A}_E)$.

Conversely, suppose $\langle \langle \, , \rangle \rangle \in \mathcal{A}_E$. If $\langle \langle \, , \rangle \rangle_0 \in \mathcal{I}_E$ is positive definite, then $\langle \langle \, , \rangle \rangle + t \langle \langle \, , \rangle \rangle_0 \in \mathcal{I}_E$ is positive definite for all $t >0$, and of form $\delta (\langle \, , \rangle_t)$ with some $\langle \, , \rangle_t \in \mathcal{H}_E$. It follows that
$$\Phi (\langle \langle \, , \rangle \rangle)= \lim_{ t \to 0} \Phi (\langle \langle \, , \rangle \rangle + t \langle \langle \, , \rangle \rangle_0)=\lim_{t \to 0} \text{FS}(\langle \, , \rangle_t)$$
is in the closure of FS$(\mathcal{H}_E)$, and this closure is $\Phi(\mathcal{A}_E)$.
\end{proof}

Of course, in general $\Phi(\mathcal{A}_E) \subset \mathcal{H}_{\omega}$, even if closed, is not a submanifold. Still, FS$(\mathcal{H}_E)$ can be continued analytically to a closed submanifold of $\mathcal{H}_{\omega}$. For this one considers $\mathcal{P}_E \subset \mathcal{I}_E$ consisting of not necessarily semidefinite inner products $\langle \langle \, , \rangle \rangle$ whose Bergman function $\kappa =L(\langle \langle \, , \rangle \rangle)$ is everywhere positive and $\log \kappa \in \mathcal{H}_{\omega}$. This is the open subset of $\mathcal{I}_E$ that $\log L$ maps in $\mathcal{H}_{\omega}$; and $\log L(\mathcal{P}_E)$ is a closed submanifold (connected and analytic) of $\mathcal{H}_{\omega}$, containing FS$(\mathcal{H}_{\omega})$ as an open subset.

\end{document}